\def\bea{\begin{eqnarray}}
\def\eea{\end{eqnarray}}
\def\beann{\begin{eqnarray*}}
\def\eeann{\end{eqnarray*}}
\def\beeq#1{\begin{equation}{#1}\end{equation}}
\def\tr{^{\rm T}}
\def\Real{{I\!\!R}}
\def\Compl {{C\!\!\!\!I}}
\def\ca{{{\cal A}}}
\def\dimvarsigma{d}
\def\dimz{m} 
\def\dimx{n} 
\def\dimy{r} 
\def\dimmu{s} 
\def\dimeta{\nu} 
\def\dimxi{p}
\def\dimzeta{o}
\def\be{\begin{equation}}
\def\ee{\end{equation}}
\def\ba{\begin{array}}
\def\ea{\end{array}}
\def\bea{\begin{eqnarray}}
\def\eea{\end{eqnarray}}
\def\beann{\begin{eqnarray*}}
\def\eeann{\end{eqnarray*}}
\newtheorem{theorem}{Theorem}
\newtheorem{proposition}{Proposition}
\newtheorem{lemma}{Lemma}
\newtheorem{definition}{Definition}
\newtheorem{remark_temp}[theorem]{Remark}
\newenvironment{remark}{\begin{remark_temp} \upshape}{ \end{remark_temp}}
\newenvironment{proof}{{\it Proof.}}{\hfill $\triangleleft$ \smallskip}
\def\real{\mbox{\rm I\kern-0.2em R}}
\def\dst{\displaystyle}
\title{Robust Asymptotic Stabilization of Nonlinear Systems\\ with Non-Hyperbolic
Zero Dynamics}
\author {L. Marconi$^{\circ}$, L. Praly$^\ast$, A. Isidori$^{\circ\dag}$\\[3mm]
$^{\circ}$ C.A.SY. -- DEIS,
University of Bologna, Bologna, Italy.\\[1mm]
 $^{\ast}$ \'Ecole des Mines de Paris, Fontainebleau, France.\\[1mm]
  $^{\dag}$DIS,
 Universit\`{a} di Roma ``La Sapienza'',  Rome, Italy.
}
\begin{document}

 \maketitle

\begin{abstract}
 In this paper we present a general tool to handle the presence of
 zero dynamics which are asymptotically but not locally exponentially stable in problems of
 robust nonlinear stabilization by output feedback. We show how it is possible
 to design {\em locally Lipschitz} stabilizers under conditions
 which only rely upon a partial detectability assumption on the controlled plant,
 by obtaining a robust stabilizing paradigm which is not based on
 design of observers and separation principles.
 The main design idea comes from recent achievements in the field
 of output regulation  and specifically in the design of nonlinear internal
 models.
\end{abstract}

\section{Introduction}
The problem of output feedback stabilization in the large for nonlinear systems has been the
subject of a remarkable research attempt in the last twenty years
 or so (see \cite{IsBook2}). The attempt has been initially turned to
 identify systematic design procedures for {\em state-feedback}
 stabilization of specific classes of nonlinear systems. To this respect
 it is worth mentioning the research current focused on back-stepping design
 procedures for lower triangular nonlinear systems with \cite{KoKo}
 for the global case and \cite{TP} for the semiglobal case.
 Then, the attention of the researchers shifted to the
 identification of partial-state and {\em output feedback} stabilization algorithms
 mainly addressed in a semi-global sense due to intrinsic limitations
 characterizing this class of problems (see \cite{Daya}).
 Within the number of research directions undertaken in this field, a
 special role has been played by {\em nonlinear separation
 principles} based on the design
 of an explicit full state observer (see \cite{TPSCL}). The main
 limitation of this approach is, thought, the lack of a
 guaranteed level of robustness of the resulting controller mainly due
 to the absence of a well-established theory of robust nonlinear state
 observers. Furthermore, full state observability of the controlled plant
 is not, in principle, a necessary condition for output feedback stabilization.
 A step forward to overcome these limitations has been taken in \cite{TP}
 with the definition of Uniform Completely Observable (UCO) state-feedback control
 law, namely a stabilizing state dependent law which can be
 expressed as nonlinear function of the control input and output and
 their time derivatives. In this case the issue is not to estimate the full-state but
 rather to reproduce directly the stabilizing law through the estimation of the
 input-output derivatives. This, in
 \cite{TP}, has been achieved by a mix of back-stepping and partial-state
 observation techniques yielding an output feedback stabilizer which is
 {\em robust} in the measure in which the UCO function does not depend on the
 uncertainties and the UCO control law is {\em vanishing} on the desired asymptotic
 attractor. Furthermore the {\em asymptotic} features of the resulting closed-loop
 system are subjected to the requirement that the initial state-feedback UCO-based
 closed-loop system is locally exponential stable. Practical
 stability must be accepted otherwise (see also \cite{Celani} at this regard).
   The latter limitation may be
 overtaken with the design of a local nonlinear observer in the spirit of
 \cite{TPSCL} by resuming again nonlinear separation principles. However,
 so-doing, the same limitations outlined before come out.\\
  Exponential stability assumptions are recurrent in several contexts of nonlinear control
  literature while studying asymptotic behaviors of nonlinear systems. Backstepping (\cite{Freeman},
  \cite{TP}), in which the backstepped control law is usually required to exponentially stabilize the
  controlled dynamics, singular perturbation (\cite{OReilly}, \cite{TeMoNe03}), in which the so-called boundary layer
  system is required to posses an exponentially stable attractor,  averaging (\cite{Khalil}, \cite{TeMoNe03}),
  in which exponential stability of the so-called averaged system is needed,  stabilization by output feedback
  (\cite{IsBook2}, \cite{TP}), in which  hyperbolic minimum-phase assumptions are usually required, are just a few contexts,
  involving problems of both synthesis and analysis
  of nonlinear systems, where the possibility of concluding asymptotic (and not only practical) results relies
  upon requirements that certain dynamics fulfill exponential stability  assumptions.
  A particular mention must be done for the design of output feedback stabilizers for nonlinear systems which
  can be written in so-called normal form (see \cite{IsBook2}). In this context exponential stability
  of the so-called zero dynamics (that is, hyperbolic zero dynamics) is very often a crucial pre-requisite if one
  is willing to address robust output feedback stabilization by means of {\em locally Lipschitz}
  regulators. \\
   In this paper we present a tool to handle the presence of not necessarily hyperbolic
   zero dynamics in the stabilization of nonlinear systems by output feedback. As particular application,
   the tool is then used to extend the main "UCO" results presented in \cite{TP} by, so doing,
   overtaking the obstacle of exponential stability in the backstepping procedure and output derivatives observer
   design.  More specifically, by means of the mathematical
 tools which have been developed in a context of nonlinear output regulation (see \cite{SICON}), \cite{BI03}),
 we show how the
 design of a dynamic output feedback control law which
 {\em asymptotically} stabilizes a compact attractor can be obtained by
 starting from a UCO state-feedback control law which does not
 necessarily stabilize in exponential way  the desired asymptotic attractor and
 which is not necessarily vanishing on it. We will show that these
 limitations can be removed by means of design techniques   aiming to
 robustly get rid of interconnections terms between nonlinear dynamics
 arising in the stability analysis which are not vanishing on the desired
 asymptotic attractor and which, as a consequence, can not be dominated only by means
 of high-gain.
 This will lead to identify a dynamic back-stepping and an
 extended partial-state observer algorithms which
 embed solution techniques typical of internal model-based design.\\
 This work is organized as follows. In the next section the framework and the general result is
 given. Then, Section 3 discusses the proposed framework and solution by properly framing the
 result in the existing literature. Section 4, articulated in three subsections, is focused on the application of the
 proposed tool in the UCO context presented in \cite{TP}. Then, Section 5 presents a few
 conditions, obtained by mild adaptation of results proposed in the output regulation literature,
 useful to construct the dynamic regulator which solve the problem discussed in Section 2. Finally
 Section 6 and 7 concludes with an example and final remarks.\\[1mm]
{\bf Notation}  For $x \in \Real^n$, $|x|$ denotes the Euclidean
norm and, for $\cal C$ a closed
 subset of $\Real^n$, $|x|_{\cal C}=\min_{y \in \cal C}|x-y|$ denotes the
 distance of $x$ from $\cal C$. For $\cal S$ a
subset of $\Real^n$, $\mbox{cl}\cal S$ and $\mbox{int}\cal S$ are
 the closure of $\cal S$ and the interior of $\cal S$
 respectively,
 and $\partial {\cal S}$ its boundary.
 A class-$\cal KL$ function $\beta(\cdot,\cdot)$
satisfying $|s| \leq d$ $\Rightarrow$ $\beta(t,s) \leq N
e^{-\lambda t} |s|$ for some positive $d$, $N$, $\lambda$ is said
to be a locally exponential class-$\cal KL$ function. For a
locally Lipschitz system of the form $\dot z = f(z)$  the value at
time $t$ of the solution passing through $z_0$ at time $t=0$ will
be written as $\phi_f(t,z_0)$ or, if the initial condition and the
system are clear from the context, as $z(t)$ or $z(t,z_0)$. For a
smooth system $\dot x = f(x)$, $x \in \Real^\dimx$, a compact set
$\ca$ is said to be LAS($\cal X$) (respectively LES($\cal X$)),
with ${\cal X} \subset \Real^\dimx$ a compact set, if it is
locally asymptotically (respectively exponentially) stable with a
domain of attraction containing ${\cal X}$. By ${\cal D}(\ca)$ we
denote the domain of attraction of $\ca$ if the latter is LAS/LES
for a given dynamics. For a function $f:\Real^n \to \Real^n$ and a
differentiable real-valued function $q:\Real^n \to \Real$,
$L_fq(x)$ denote the Lie derivative at $x$ of $q$ along $f$. For a
smooth system $\dot x = f(x)$, $x \in \Real^\dimx$ the
$\omega$-{\em limit set} of a subset $B \subset \Real^\dimx$,
written $\omega(B)$, is the set of all points $x\in\Real^\dimx$
for which there exists a sequence of pairs $(x_k,t_k)$, with
$x_k\in B$ and $t_k\to \infty$ as $k\to \infty$, such that
$\lim_{k\to \infty}\phi_f(t_k,x_k)=x$.

\section{The framework and the main result} \label{SecFramework}
 The main goal of this paper is to present a design tool to
 handle the presence of asymptotically but not necessarily
 exponentially stable zero dynamics in robust output-feedback stabilization
 problems of nonlinear systems.
 Although the tool we are going to present lends itself to be useful in a
 significant variety of control scenarios, in order to keep confined the discussion
 while maintaining a certain degree of generality, we focus our attention on the class of
 {\em smooth} systems of the form
 \beeq{\label{plantstab}
  \ba{rcll}
  \dot x &=& f(w, x, y) & \quad  x \in \Real^\dimx\,, \; n \geq 0 \\
  \dot y &=&  \kappa \, A y  + B(q(w, x,y) + v) & \quad y \in
  \Real^\dimy\,, \; r \geq 1
  \ea
 }
 with measurable output
 \[
   y_m = C y \qquad y_m\in \Real
 \]
 in which the linear system $(A,B,C)$ is assumed to have relative degree $r$ with the pair $(A,C)$
 observable,
 $\kappa$ is a positive design parameter and $v$ is a control input.
 In the previous system the variable $w \in \Real^\dimmu$
 represents an exogenous variable which is governed by
 \beeq{ \label{exosys}
  \dot w = s(w) \qquad w \in W \subset \Real^\dimmu
 }
 with $W$ a compact set which is {\em invariant} for
 (\ref{exosys}).   As a particular case, the signals $w(t)$ generated by
 (\ref{exosys}) may be constant signals, i.e. $s(w)\equiv 0$, namely constant uncertain parameters taking
 value in the set $W$ and  affecting the system (\ref{plantstab}).
 In general, the variables $w$ can be considered as exogenous signals which, depending on the
 considered control scenario, may represent references to be
 tracked and/or disturbances to be rejected.

 \begin{remark}
 As a consequence of the fact that $W$ is a (forward and backward)
 invariant set for (\ref{exosys}), the {\em closed} cylinder ${\cal C}_{\dimx + \dimy}:=
 W \times \Real^{\dimx + \dimy}$ is invariant for (\ref{plantstab}),(\ref{exosys}).
 Thus it is natural to regard
 system (\ref{plantstab}), (\ref{exosys}) on ${\cal C}_{\dimx + \dimy}$ and endow the latter with the relative topology.
 This will be done from now on by referring to system (\ref{plantstab}),(\ref{exosys}).
 Analogously, the dynamics described by the first $\dimx$
 equations of (\ref{plantstab}) and by (\ref{exosys}) will be thought as evolving on the closed set
 ${\cal C}_{\dimx}:=
 W \times \Real^{\dimx}$ which will be endowed with the relative
 topology. $\triangleleft$
 \end{remark}

 We shall study the previous system under the following ``minimum-phase"
 assumption.

 {\em Assumption} There exist compact sets $\ca \subset
 {\cal C}_\dimx$ which is locally asymptotically stable for the
 system
 \beeq{\label{zd1}
 \ba{rcl}
 \dot w &=& s(w)\\
 \dot x &=& f(w, x,0)\,. \quad \triangleleft
 \ea
 }
 Under this assumption, there exists a compact set ${\cal X} \subset {\cal
 C}_n$ such that $\ca \subset \mbox{int} \cal X$ and $\ca$ is
 LAS($\cal X$) for system (\ref{zd1}).\\
 In this framework we consider the output feedback stabilization problem
 which consists of designing a {\em locally Lipschitz} regulator of the form
  \beeq{\label{etasys}
  \dot \eta = \varphi_k(\eta, y_{\rm m}) \qquad v=\rho_k(\eta,y_{\rm m})
  \qquad \eta \in \Real^\dimeta\,,
  }
  and, given arbitrary bounded sets
  ${\cal Y} \subset \Real^\dimy$ and ${\cal N} \subset \Real^\dimeta$,
  a positive $\kappa^\star$, such that for all $ \kappa \geq \kappa^\star$ and
  for some ${\cal B} \subset \Real^{\dimeta + \dimx}$ the set ${\cal B} \times \{0\}$
  is LAS(${\cal N} \times {\cal X} \times {\cal Y}$) for the closed-loop system
  (\ref{plantstab}), (\ref{etasys}).

 The important point here is that $\varphi_k$ and $\rho _k$ must be
locally Lipschitz. This restriction has strong practical
motivations like sensitivity to noise or numeric and discrete time
implementation.

  The goal of the following part is to present a result regarding the solution of
  the robust stabilization problem formulated above. In order to ease the notation,
  in the following we shall drop in (\ref{plantstab}) the dependence from the variable $w$ which, in turn, will be
  thought as embedded in the variable $x$ (with the latter varying in the set ${\cal C}_{\dimx}$). This, with a mild abuse of notation, will allow us  to rewrite
  system (\ref{plantstab}) and (\ref{exosys}) in the more compact form
  \beeq{\label{plantstab+exo}
  \ba{rcll}
  \dot x &=& f(x, y) & \quad x \in {\cal C}_\dimx \subset \Real^{\dimmu + \dimx}\\
  \dot y &=&  \kappa\, A y +  B(q(x,y) + v)  & \quad y
  \in  \Real^{\dimy}
  \ea
 }
 and system (\ref{zd1}) as $\dot x = f(x,0)$.

 The existence of a locally Lipschitz regulator solving the problem at hand, will be claimed under
 an assumption which involves the ability of asymptotically reproducing
 the function $q(x(t),0)$, where $x(t)$ is any solution of $\dot x
 = f(x,0)$ which can be generated by taking initial conditions on $\ca$, by means of a locally
 Lipschitz system properly defined. The following definition
 aims to formally state the required reproducibility conditions which will be then used
 in the forthcoming Theorem \ref{TheoremFirst}.

 \begin{definition} (LER, rLER). \label{DefinitionRegularitySoft}
 A triplet $(F(\cdot), Q(\cdot), {\cal A})$, where
 $F: \Real^\dimz \to \Real^\dimz$ and $Q: \Real^\dimz \to \Real$
 are smooth functions and ${\cal A}\subset \Real^\dimz$ is a compact
 set, is said to be {\em Locally Exponentially Reproducible} (LER), if
 there exists a compact set ${\cal R} \supseteq \cal A$ which is LES
 for $\dot z = F(z)$ and, for any bounded set $\cal Z$ contained in
 the domain of attraction of $\cal R$, there exist an integer $\dimxi$,
 {\em locally Lipschitz} functions $\varphi: \Real^\dimxi \to \Real^\dimxi$, $\gamma:
 \Real^\dimxi \to \Real$, and
 $\psi: \Real^\dimxi \to \Real^\dimxi$, with $\psi$ a complete vector field, and a {\em locally Lipschitz} function
 $T:\Real^\dimz \to \Real^\dimxi$,
 such that
   \beeq{\label{Timmdef}
    \ba{rcl}
       Q(z) + \gamma(T(z)) =0
    \ea \qquad \forall \; z \in {\cal R}\,,
   }
   and for all $\xi_0 \in \Real^\dimxi$ and
   $z_0 \in \cal Z$  the solution $(\xi(t), z(t))$ of

       \beeq{\label{xisysdef}
        \ba{rcll}
         \dot z &=& F(z) & z(0) = z_0\\
            \dot \xi &=& \varphi(\xi) +
            \psi(\xi)\, Q(z)  & \xi(0) = \xi_0
   \ea
     }
     satisfies
     \beeq{\label{condstabdef}
        |(\xi(t),z(t))|_{{\rm graph}\left. T \right |_{\cal R}}
        \leq \beta(t, |(\xi_0, z_0)|_{{\rm graph}\left . T \right |_{\cal R}
        })
     }
    where $\beta(\cdot,\cdot)$ is a locally exponentially class-$\cal
    KL$ function.

    Furthermore the triplet in question is said to be {\em robustly Locally Exponentially
    Reproducible} (rLER) if it is LER and, in addition, for all locally essentially bounded $v(t)$, for all $\xi_0 \in \Real^\dimxi$ and
   $z_0 \in \cal Z$  the solution $(\xi(t), z(t))$ of

       \beeq{\label{xisysdefRob}
        \ba{rcll}
         \dot z &=& F(z) & z(0) = z_0\\
            \dot \xi &=& \varphi(\xi) +
            \psi(\xi)[Q(z) + v(t)] & \xi(0) = \xi_0
   \ea
     }
     satisfies
     \beeq{\label{condstabdefRob}
        |(\xi(t),z(t))|_{{\rm graph}\left. T \right |_{\cal R}}
        \leq \beta(t, |(\xi_0, z_0)|_{{\rm graph}\left . T \right |_{\cal R}
        })+ \ell(\sup_{\tau \leq t} |v(\tau)|)
     }
    where $\beta(\cdot,\cdot)$ is a locally exponentially class-$\cal
    KL$ function and $\ell$ is a class-$\cal K$ function.
     $\triangleleft$
 \end{definition}

 We postpone to Section \ref{SecDigression} a broad discussion about this
 definition and to Section \ref{SecSuff} the presentation of
 sufficient conditions for a triplet to be rLER.

%
%
%
%

 With this definition at hand, we pass to formulate the following
 theorem which fixes a framework where the stabilization problem
 previously formulated can be solved by means of a locally
 Lipschitz regulator. The proof of this theorem can be found
 in Appendix \ref{ProofTheoremFirst}.\\[1mm]
 \begin{theorem} \label{TheoremFirst}
  Let $\ca$ be LAS(${\cal X}$) for the system $\dot x = f(x,0)$ for some compact set ${\cal X} \subset {\cal C}_n$.
  Assume, in addition, that the triplet
  $(f(x,0),{q}(x,0),{\ca})$ is {\em LER}.
  Then there exist a locally Lipschitz regulator of the form
  (\ref{etasys}), a compact set ${\cal R} \supseteq \ca$, a continuous function $\tau: {\cal R} \to \Real^\dimeta$,
  and, for any  compact set ${\cal Y} \subset  \Real^\dimy$ and ${\cal N} \subset \Real^\dimeta$, a positive
  constant $\kappa^\star$, such that for all $\kappa \geq \kappa^\star$ the set
 \beeq{\label{graphtau}
   {\rm graph }\tau \times \{0\} = \{(\eta, x,y) \in \Real^\dimeta \times {\cal R} \times \Real^\dimy
   \quad : \quad \eta = \tau(x)\,, \; y=0\}
 }
 is LES(${\cal N} \times {\cal X} \times {\cal Y}$) for (\ref{plantstab+exo}), (\ref{etasys}) and the set
 \beeq{\label{graphtau}
   {\rm graph }\left . \tau \right |_\ca \times \{0\} = \{(\eta, x,y) \in \Real^\dimeta \times {\ca} \times \Real^\dimy
   \quad : \quad \eta = \tau(x)\,, \; y=0\}
 }
  is LAS(${\cal N} \times {\cal X} \times {\cal  Y}$) for (\ref{plantstab+exo}),
  (\ref{etasys}). Furthermore, if $\ca$ is also LES for the system $\dot x =
  f(x,0)$, the set $\cal R$  can be taken equal to
  $\ca$.
  \end{theorem}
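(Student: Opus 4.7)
The plan is to combine the internal model supplied by the LER property with a high-gain output injection on the linear chain $(A,B,C)$, and then to extract the LAS claim from an invariance argument on the resulting LES manifold. The first step is to invoke LER for the triplet $(f(\cdot,0),q(\cdot,0),\ca)$: picking a bounded ${\cal Z}$ containing ${\cal X}$ and included in ${\cal D}({\cal R})$, where ${\cal R}\supseteq\ca$ is the compact LES set granted by LER, one obtains an integer $\dimxi$, locally Lipschitz maps $\varphi,\psi,\gamma$ and a locally Lipschitz immersion $T$ satisfying $q(x,0)+\gamma(T(x))=0$ on ${\cal R}$, together with the locally exponential attractivity of the graph of $T|_{\cal R}$ for the cascade $\dot x=f(x,0)$, $\dot\xi=\varphi(\xi)+\psi(\xi)q(x,0)$. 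With this material in hand I would take $\dimeta=\dimxi$ and propose a regulator of the form
\[
 \dot\eta=\varphi(\eta)-\psi(\eta)\gamma(\eta)+\psi(\eta)\,\ell_\kappa(y_m),\qquad v=\gamma(\eta)+u_\kappa(y_m),
\]
with $\ell_\kappa,u_\kappa$ locally Lipschitz, vanishing at $y_m=0$, and obtained by a standard high-gain output-injection design for the Hurwitzized chain $(A,B,C)$. The crucial feature is that on the sub-manifold ${\cal M}:=\{y=0,\,\eta=T(x),\,x\in{\cal R}\}$ one has $q(x,0)+\gamma(T(x))=0$ and $\ell_\kappa(0)=u_\kappa(0)=0$, while the $\eta$-vector field at $\eta=T(x)$ matches $(\partial T/\partial x)\,f(x,0)$ by the LER invariance identity; hence ${\cal M}$ is rendered invariant by the closed-loop.

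Next, in the error coordinates $\tilde\eta=\eta-T(x)$ the closed-loop becomes a cascade of the form
\[
 \dot x=f(x,y),\qquad \dot{\tilde\eta}=F_1(\tilde\eta,x,y),\qquad \dot y=\kappa Ay+B\bigl[\Delta(\tilde\eta,x,y)+u_\kappa(Cy)\bigr],
\]
in which $F_1$ and $\Delta$ vanish on ${\cal M}$, the reduced map $\tilde\eta\mapsto F_1(\tilde\eta,x,0)$ coincides with the LER internal-model error dynamics and is LES uniformly in $x\in{\cal R}$, and the linear chain $\dot y=\kappa Ay+Bu_\kappa(Cy)$ is LES with decay rate $O(\kappa)$. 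I would combine these partial stability estimates through a composite Lyapunov function and pick $\kappa^\star$ large enough to dominate both the cross-coupling $B\Delta$ and the $y$-perturbation entering $F_1$, concluding LES of ${\cal M}$ with domain of attraction containing ${\cal N}\times{\cal X}\times{\cal Y}$. Setting $\tau(x):=T(x)$ then delivers the first assertion of the theorem.

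Finally, the LAS claim for the graph of $\tau|_\ca\times\{0\}$ will follow by restricting the closed-loop to ${\cal M}$: on this LES-invariant manifold the dynamics reduce to $\dot x=f(x,0)$, for which $\ca$ is LAS with basin containing ${\cal X}$, and a standard reduction argument transfers LAS from the manifold to the full state. If $\ca$ is itself LES for $\dot x=f(x,0)$, then the LER definition already allows ${\cal R}=\ca$, giving the last statement of the theorem. The hard part will be the composite-Lyapunov step: one must absorb the non-vanishing interconnection terms via high-gain while retaining only the LER (not rLER) hypothesis --- which is precisely why the regulator is engineered so that the internal model sees the \emph{exact} driver $q(x,0)$ on ${\cal M}$, rather than a perturbed surrogate that would otherwise force one to invoke the stronger rLER assumption.
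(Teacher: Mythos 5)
Your overall architecture (internal model from LER, high-gain on the $y$-chain, invariance of the manifold $\{y=0,\ \eta=T(x)\}$, then an $\omega$-limit/reduction argument to pass from LES of the graph over ${\cal R}$ to LAS of the graph over $\ca$) matches the paper's, and your invariance computation on ${\cal M}$ is correct. But there is a genuine gap at the step you yourself flag as ``the hard part''. For the internal model to be driven by the plant at all, $\ell_\kappa$ cannot vanish identically; the natural (and, in the paper, the actual) choice is $\ell_\kappa(y_m)=-\kappa B\tr A y$, i.e.\ an injection that scales with $\kappa$. In your error coordinates $\tilde\eta=\eta-T(x)$ the perturbation entering $F_1$ is then $O(\kappa|y|)$, while the $y$-subsystem contracts at rate $O(\kappa)$ and is forced by terms of order $|\tilde\eta|+|y|$. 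The resulting loop gain is $O(\kappa)\cdot O(1/\kappa)=O(1)$: it does \emph{not} shrink as $\kappa$ grows, so the composite-Lyapunov/small-gain step does not close by ``picking $\kappa^\star$ large enough''. (A two-dimensional linear caricature $\dot{\tilde\eta}=-\tilde\eta+\kappa c\,y$, $\dot y=-\kappa y+d\,\tilde\eta$ shows stability then hinges on the $\kappa$-independent condition $cd<1$, which need not hold.)

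The paper's proof resolves exactly this obstruction with a device your proposal lacks: the off-manifold change of coordinates $\chi:=\phi_\psi(B\tr y,\eta)$, i.e.\ flowing $\eta$ along the (complete) vector field $\psi$ for ``time'' $B\tr y$. Because $\partial\phi_\psi/\partial t'=(\partial\phi_\psi/\partial\eta)\,\psi$, the $\kappa B\tr A y$ contributions from $\dot\eta$ and from $B\tr\dot y$ cancel identically in $\dot\chi$, leaving $\dot\chi=\varphi(\chi)+\psi(\chi)q(x,0)+\tilde\ell_1(x,\chi,y)$ with $\tilde\ell_1$ vanishing at $y=0$ and with a $\kappa$-\emph{independent} Lipschitz bound; only then does the standard high-gain interconnection result of \cite{TP}, \cite{Atassi} apply. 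This is also why the regulator's $\eta$-equation is written with the specific term $-\psi(\eta)\kappa B\tr A y$ rather than a generic $\psi(\eta)\ell_\kappa(y_m)$, and why completeness of $\psi$ is demanded in the LER definition. A secondary point: your identity $(\partial T/\partial x)f(x,0)=\varphi(T(x))+\psi(T(x))q(x,0)$ presumes $T$ differentiable, whereas LER only provides a locally Lipschitz $T$; the paper sidesteps this by arguing with the LES property of ${\rm graph}\,T|_{\cal R}$ as a set rather than differentiating $T$. You should restructure the quantitative part of your argument around the $\phi_\psi$ coordinate change (or an equivalent mechanism that strips the $\kappa$-dependence out of the internal-model coupling) before the LES claim can be considered proved.
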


 \begin{remark}
 By going throughout the proof of the previous theorem, it turns out that the
 regulator (\ref{etasys}) solving the problem at hand has the
 form
 \[
  \ba{rcl}
  \dot \eta &=& \varphi(\eta) - \psi(\eta)[\gamma(\eta) + \kappa B\tr A
  y]\\
  v &=& \gamma(\eta)
  \ea
 \]
 in which $\kappa$ is a sufficiently large positive number and
  $(\varphi(\cdot),\psi(\cdot),\gamma(\cdot))$ are the locally
  Lipschitz functions which are associated to the triplet
  $(f(x,0),{q}(x,0),{\ca})$ in the definition of local exponential
  reproducibility. $\triangleleft$
 \end{remark}

\section{A brief digression about the problem} \label{SecDigression}
 The structure of (\ref{plantstab}) and the associated
 problem, apparently very specific, are indeed recurrent in a number of control scenarios
 in which robust non  linear stabilization is involved. We refer to Section \ref{SecUCO} for
 the presentation of a few relevant cases where this
 occurs. For the time being it is interesting to note how the previous formulation presents two
 main peculiarities which make the problem at hand particularly challenging.\\
  The first is that the function $q(w,x,y)$, coupling the $x$ and $y$ subsystem in
 (\ref{plantstab}), is not necessarily vanishing on the desired
 attractor $\ca \times \{0\}$, namely the desired
 attractor $\ca \times \{0\}$ is not necessarily forward invariant
 for (\ref{plantstab}) in the case $v\equiv 0$. In this respect the first
 crucial property required to the regulator is to be able to reproduce, through the
 input $v$, the uncertain coupling term
 $q(w,x,0)$ by providing a not necessarily zero steady-state
 control input. This issue is intimately connected to arguments which are usually
 addressed in the output regulation literature (see \cite{SICON}, \cite{BI03}), \cite{Huang},
 \cite{Pavlov}),  in which the goal is precisely to make attractive a set, on which regulation objectives are met,
 which is not invariant for the open-loop system.

 The second peculiarity, apparently not correlated to the previous one,
 relies in the fact that the set $\ca$ is assumed to be "only" asymptotically stable for
 (\ref{zd1}) and no exponential properties are required. In this
 respect the study of the interconnection (\ref{plantstab}) is particularly
 challenging as it is not sufficient, in general, to decrease the linear
 asymptotic gain (\cite{Te96}) between the "inputs" $x$ and the "outputs" $y$ of the $y$-subsystem
 (which is what one would make by increasing the value of $\kappa$ since the matrix $A$
 is Hurwitz) to infer asymptotic properties in the interconnection.
 Indeed the presence of a not necessarily linear asymptotic gain
 between the "inputs" $y$ and the "outputs" $x$ of the
 $x$-subsystem requires a non trivial design of the input $v$
 which, intuitively, should be chosen to infer a certain locally non-Lipschitz
 ISS gain to the $y$-subsystem.

  The rich available literature on nonlinear stabilization already provides
  successful tools to solve the problem at hand if the previous two pathologies
  are dropped, namely if the  assumption is strengthen
  by asking that the set $\cal A$ is also LES(${\cal X}$) for (\ref{zd1}) and
  that the "coupling" term $q(w,x,y)$  is vanishing at ${\cal A} \times \{ 0\}$.
  As a matter of fact, under the previous conditions, it is a well-known fact
  that the set $\ca \times \{0\}$, which is forward invariant for (\ref{plantstab})
  with $v=0$, can be stabilized by means of a large value of $k$ as
  formalized in  (\cite{TP}, \cite{Atassi}).\\

 In the case $\ca$ is not exponentially stable
 for (\ref{zd1}) and/or the  coupling term $q(w,x,y)$ is not vanishing on
 the desired attractor,  the problem becomes challenging and more sophisticated
 choices for $v$ must be envisaged. In particular, while preserving the local Lipschitz property of the
 regulator,
 the only conclusions which can be drawn if $v\equiv 0$ is that the
 origin is {\em semiglobally practically} stable in the parameter
 $\kappa$, that is the trajectories of the system can be steered arbitrary
 close to the set $\ca \times \{0\}$ by increasing the value of $\kappa$ (see \cite{TP}, \cite{Atassi}, \cite{SICON}).
 Even in the simpler scenario in which $q(w,x,0)\equiv 0$ for
 all $(w,x) \in \ca$, a large value of  $k$ is not sufficient
 to enforce the desired asymptotic behavior in the case the set
 $\ca$ fails to be exponentially stable for (\ref{zd1}).
  In this case the asymptotic properties of the system
  have been studied in \cite{Celani}
 by showing how the trajectories are attracted by
 a manifold which, only in a particular case depending on the linear approximation
 of the system, collapses to the origin (see Theorem 6.2 in  \cite{Celani}).

  In these critical scenarios an appropriate design of the control input $v$
  becomes inevitable in order to compensate for the coupling term $q(w,x,y)$  which cannot
  be only dominated by a large value of $\kappa$.
  In particular, a first possible option, motivated by small
  gain arguments and gain assignment procedures for nonlinear systems (see \cite{JiangBook},
  \cite{JiangTeelPraly}), is to design the control $v$ in order  to
  assign, to the $y$-subsystem, a certain nonlinear ISS gain suitably identified according to small gain criterions
  and to the asymptotic gain of the $x$-subsystem (\ref{plantstab}).
  This option, however, necessarily leads to design control laws
  which are not, in general, locally Lipschitz close to the
  compact attractor and, thus, which violates a basic requirement of the above
  problem.\\
  An alternative option to design the control $v$ is
  to be inspired by nonlinear separation principles (see, besides others,
  \cite{TP}, \cite{TPSCL}, \cite{Atassi}, \cite{IsBook2}, \cite{GKbook}), namely  to
  design an appropriate state observer yielding an asymptotic estimate
  $(\hat w,\hat x,\hat y)$ of the state variables, and to
  asymptotically compensate for the coupling term $q(w,x,y)$ by
  implementing a ``certainty equivalence'' control law of the form $v = - q(\hat w,\hat x, \hat
  y)$. Indeed, under suitable conditions, the tools proposed in
  \cite{TPSCL} would allow one to precisely fix the details and to solve the problem
 at hand in a rigorous way.
 This way of approaching the problem, though, presents a number of
 drawbacks which substantially limit its applicability.
 First, the design of the observer clearly requires the formulation of
 suitable observability assumptions\footnote{It must be noted that
 only {\em local} observability notion are potentially needed at this level as a consequence
 of the fact that practical stability is already guaranteed by the high-gain law
 $\kappa$.} on the controlled plant, and in particular of its $(w,x)$ components, not in principle necessary
 for the stabilization problem to be solvable, which may be not fulfilled
 for a number of relevant cases.
 Moreover, according to the state-of-the-art of the observer
 design literature (\cite{GKbook}), the design of the observer may be a challenging (if not impossible)
 task in case of uncertain parameters affecting the observed
 dynamics. Finally, it is worth noting how approaching the problem according to the
 previous design philosophy, leads to inherently {\em redundant control structures}, by
 requiring the explicit estimate of the full state (and of possible uncertainties)
 in order to reproduce the signal  $q(w,x,y)$.

 As opposite to the previous strategies, Theorem \ref{TheoremFirst}
 provides a design procedure which does not rely upon domination of the interconnection term
 $q(w,x,y)$ but rather on its asymptotic reconstruction which, however, it is not based upon the design of
 an observer of the state variables $(w,x,y)$.
 In this respect the crucial
 property underlying the Theorem is the local exponential
 reproducibility property which, according to its definition, relies upon two requirements.
 The key first requirement, for a triplet $(F,Q,\ca)$ to be
 LER,  is that there exists a set $\cal R$ which contains $\ca$
 and which is LES for the autonomous system $\dot z = F(z)$.
  The second crucial requirement characterizing the
  definition is that there exists a {\em locally Lipschitz} system of the form
  \beeq{\label{xisys}
 \ba{rcl}
  \dot \xi &=& \varphi(\xi) +  \psi(\xi) u_\xi\\
  y_\xi &=& \gamma(\xi)
  \ea
  }
  with input $u_\xi$ and output $y_\xi$,
 such that system (\ref{xisysdef}), modelling the cascade connection of the autonomous system $\dot z = F(z)$
 with output $y_z = Q(z)$ with the system (\ref{xisys}), has a
 locally exponentially stable set described by  ${\rm graph} \left .
 T \right |_{\cal R}$ and, on this set, the output $y_\xi$ equals $y_z$ (see (\ref{Timmdef})).
 The domain of attraction of ${\rm graph} \left .
 T \right |_{\cal R}$ is required to be of the form ${\cal Z} \times \Real^\dimxi$ with $\cal Z$ any
 compact set in the domain of attraction of $\cal R$ (note that, according to the definition, system
 (\ref{xisys}) is allowed to depend on the choice of $\cal Z$).
 In this respect the second requirement can be regarded as the ability, of the system
 (\ref{xisys}), of {\em asymptotically reproducing} the output function $Q(z(t))$ of system $\dot z(t) = F(z(t))$ with initial
 conditions of the latter taken in $\cal Z$. Note how the "output reproducibility" property
 required to system (\ref{xisys}) does not hide, in principle, any
 kind of state observability property of the system $\dot z=F(z)$
 with output $y_z= Q(z)$. In other words system (\ref{xisys}) must
 be not confused with a state observer of the $z$-subsystem as its role is to reproduce
 the output function $Q(z(t))$ and not necessarily to estimate its state.

  As the definition of {\em robust} LER, we only note that, in addition to the previous properties, it is required
  that system (\ref{xisysdefRob}) exhibits an ISS property (without any special requirement on the asymptotic gain)
  with respect to the exogenous input $v$.

 \section{Applications}
\subsection{Output-feedback from UCO state-feedback in presence of non-hyperbolic
attractors}\label{SecUCO}

 In this part we show how the theory of robust nonlinear separation principle presented in
 \cite{TP}, \cite{Atassi}
  can be extended with the tools developed in the previous sections. In particular we are interested to
  extend the theory of \cite{TP} by showing how to design a pure
  output-feedback semiglobal controller stabilizing an attractor when it is known how the
  latter can be asymptotically ({\em but not exponentially}) stabilized
  by means of a  Uniform Completely Observable (UCO) state-feedback
  controller.

 Consider the smooth system
 \beeq{\label{pla0withw}
  \ba{rcll}
  \dot w &=& s(w) & \qquad w \in  W \subset \Real^\dimmu\\
  \dot z &=& A(w,z,u) & \qquad z \in \Real^\dimz, \, u \in \Real\\
  y &=& C(w, z) & \qquad y \in \Real
  \ea
 }
 in which $u$ and $y$ are respectively the control input and the
 measured output and $W$ is a compact set which is invariant for
 $\dot w=s(w)$. As discussed in the previous section, the variables  $w$ emphasize the possible presence
 of parametric uncertainties and/or disturbance to be rejected and/or reference to be tracked (in the latter case the
 measurable output $y$ plays more likely the role of regulation/tracking error).
 As done before, in order to simplify the notation, we drop the
 dependence of the variable $w$ and we compact system
 (\ref{pla0withw}) in the more convenient form

  \beeq{\label{pla0}
  \ba{rcll}
  \dot z &=& A(z,u) & \qquad z \in \Real^\dimz, \, u \in \Real\\
  y &=& C(z) & \qquad y \in \Real
  \ea
  }
  which is supposed to evolve on a closed invariant set ${\cal C}_\dimz$ which is endowed with the subset topology
 (such a closed set being, in the form (\ref{pla0withw}), the closed cylinder ${\cal C}_\dimz:= W \times
  \Real^\dimz$).

 We recall (see \cite{TP}) that a function $\bar u :  \Real^\dimz
 \to \Real$ is said to be UCO with respect to (\ref{pla0}) if there exist two integers
 $n_y$, $n_u$ and a ${\cal C}^1$ function $\Psi$ such that, for each solution of
 \beeq{\label{plaextended}
  \ba{rcl}
  \dot z &=& A(z,u_0)\\
  \dot u_i &=& u_{i+1} \qquad i=0,\ldots, n_u-1\\
  \dot u_{n_u} &=& v\\
 \ea
 }
 we have, for all $t$ where the solution makes sense,
 \beeq{ \label{ubarnot}
 \bar u(z(t)) = \Psi(y(t), y^{(1)}(t), \ldots, y^{(n_y)}(t),
u_0(t), \ldots, u_{n_u}(t) )
 }
 where $y^{(i)}(t)$ denotes the $i$th derivative of $y$ at time
 $t$.

  Motivated by \cite{TP} we shall study system (\ref{pla0}) under
  the following two assumptions:

   \begin{itemize}
  \item[a)] there exist a smooth function $\bar u :  \Real^\dimz
 \to \Real$ and compact sets $\ca \subset {\cal C}_\dimz$ and ${\cal Z} \subset {\cal
 C}_\dimz$, such that the $\ca$ is LAS($\cal Z$) for system (\ref{pla0}) with $u=  \bar u(z)$;\footnote{By referring
 to (\ref{pla0withw}), a meaningful case  to be considered
  is when $\ca = W \times \{0\}$, in which case this assumption amounts
  to require the existence  of a state feedback stabilizer, possibly dependent on
  the uncertainties, able to asymptotically stabilize the origin with a certain domain of attraction.}
  \item[b)] $\bar u(z)$ is UCO with respect to (\ref{pla0}).
   \end{itemize}

 In this framework we shall be able to prove, under suitable
 reproducibility conditions specified later, that the previous two
 assumptions imply the existence of a locally Lipschitz dynamic output
 feedback regulator able to asymptotically stabilize the set
 $\ca$. The main theorem in this direction is detailed next. In
 this theorem we refer to an integer $\ell_u \geq n_u$ defined as
 that number such that for the system

 \beeq{\label{extendedzsys}
       \ba{rcl}
        \dot z &=& A(z,\xi_0)\\
         \dot \xi_0 &=& \xi_1\\
         &\vdots&\\
         \dot \xi_{\ell_u} &=& u_1\,,
         \ea
        }
 there exist smooth functions $C_i$ such that the first $n_y+1$
 time derivatives of $y$ can be expressed as
  \[
    y^{(i)} = C_i(z,\xi_0,\ldots, \xi_{\ell_u}) \qquad \forall \,
   i=0, \ldots, n_y +1\,.
  \]

 \begin{theorem} \label{TheoremMainUCO}
 Consider system (\ref{pla0}) and assume the existence of a
 compact set $\ca \subset {\cal C}_\dimz$ and of a smooth
 function $\bar u(z)$ such that properties (a) and (b) specified
 above are satisfied. Assume, in addition,  that  the triplets
 \beeq{ \label{triplet1}
 (A(z,\bar u(z)), L_{A(z,\bar u(z))}^{(\ell_u +1)} \bar u(z), {\ca})
 }
 and
 \beeq{ \label{triplet2}
 (A(z,\bar u(z)), L_{A(z,\bar u(z))}^{(n_y +1)}  C(z), {\ca})
 }
 are rLER. Then
 there exist a positive $\dimzeta$, a compact set ${\cal B} \subset \Real^\dimzeta$ and,
 for any ${\cal N} \subset \Real^\dimzeta$, a locally Lipschitz controller of the form
 \beeq{\label{finalregUCO}
 \ba{rcl}
 \dot \zeta &=& \Phi(\zeta,y) \qquad \zeta \in \Real^\dimzeta\\
 u &=& \Upsilon(\zeta,y)
 \ea
 }
 such that the set $\ca \times \cal B$ is LAS(${\cal Z} \times
 \cal N$) for the closed-loop system (\ref{pla0}),
 (\ref{finalregUCO}).
 \end{theorem}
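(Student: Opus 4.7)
\begin{sketch}
The plan is to reduce Theorem~\ref{TheoremMainUCO} to an application of Theorem~\ref{TheoremFirst}. The idea is to build an extended closed-loop system fitting the normal form (\ref{plantstab+exo}) whose $x$-block is the original plant driven by the ideal feedback $u=\bar u(z)$, and whose $y$-block is a high-gain chain consisting of $n_u+1$ integrators prolonging the control, together with an $(n_y+1)$-dimensional high-gain observer of the derivatives of the measured output.

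First I would invoke the UCO property~(b): along solutions of the extended plant (\ref{plaextended}), the stabilizing feedback can be written as $\bar u(z)=\Psi(y,y^{(1)},\ldots,y^{(n_y)},u_0,\ldots,u_{n_u})$. Following the Teel--Praly scheme of \cite{TP}, I would prolong the input with $\dot u_i=u_{i+1}$ ($i=0,\ldots,n_u-1$) and $\dot u_{n_u}=v$, and append a high-gain observer producing estimates $(\hat y_0,\ldots,\hat y_{n_y})$ of the successive derivatives of $y$, all with gains scaled by a single parameter $\kappa>0$. The certainty-equivalence law obtained by evaluating $\Psi$ on these estimates is then enforced on $u_0$ through $v$ by dynamic back-stepping along the integrator chain.

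Next I would rewrite the cascade in the form (\ref{plantstab+exo}). The $x$-subsystem is the ideal closed loop $\dot z = A(z,\bar u(z))$, so the LAS(${\cal Z}$) hypothesis on $\ca$ required by Theorem~\ref{TheoremFirst} is supplied by assumption~(a). The $y$-block collects the rescaled observer errors and the rescaled offsets $u_i - L_{A(z,\bar u(z))}^{(i)}\bar u(z)$, after scaling by appropriate powers of $\kappa$. A direct computation shows that the resulting coupling term $q(x,y)$ decomposes into a part that vanishes on $\ca\times\{0\}$ and two residual ``top-of-chain'' terms that do not: $L_{A(z,\bar u(z))}^{(n_y+1)}C(z)$, arising from the observer saturation at order $n_y+1$, and $L_{A(z,\bar u(z))}^{(\ell_u+1)}\bar u(z)$, arising from the control saturation at order $\ell_u+1$. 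These are exactly the second entries of the triplets (\ref{triplet1})--(\ref{triplet2}), and the rLER assumption therefore furnishes, via (\ref{xisysdefRob}), two locally Lipschitz internal models that asymptotically reconstruct them with a robustness margin of ISS type.

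Finally I would parallel-connect these two internal models with the integrator--observer chain, and define $v$ to be the sum of their outputs (with appropriate rescaling and sign) so as to cancel the non-vanishing parts of $q$. The composite dynamics is then taken as the regulator (\ref{finalregUCO}), and a verification of the hypotheses of Theorem~\ref{TheoremFirst} for the resulting closed loop yields LAS of a set $\ca\times{\cal B}$ with domain of attraction containing any prescribed ${\cal Z}\times{\cal N}$, for all $\kappa$ larger than some $\kappa^\star$. The main technical obstacle, I expect, is the bookkeeping of the $\kappa$-rescaling: one must check that the cross-perturbations between the high-gain observer, the integrator chain, and the internal models enter the latter as admissible inputs $v(t)$ in the sense of (\ref{xisysdefRob}), so that the robustness term $\ell(\sup|v|)$ absorbs them while the high-gain action simultaneously dominates the linear interconnections, without invoking any exponential stability of $\ca$ for the $z$-dynamics. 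That delicate interplay between high-gain domination and asymptotic internal-model reproduction is precisely what the rLER notion was crafted to handle, and at this point the heavy lifting is performed by Theorem~\ref{TheoremFirst}.
\end{sketch}
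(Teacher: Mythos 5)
You have correctly identified the overall strategy (Teel--Praly prolongation of the input, dirty-derivatives estimation of the output derivatives, identification of the two non-vanishing residuals $L_{A(z,\bar u(z))}^{(\ell_u+1)}\bar u(z)$ and $L_{A(z,\bar u(z))}^{(n_y+1)}C(z)$ with the two rLER triplets, and reduction to Theorem~\ref{TheoremFirst}), but the one-shot reduction you propose has a structural gap. Theorem~\ref{TheoremFirst} applies to a \emph{single} high-gain chain $\dot y=\kappa Ay+B(q(x,y)+v)$ with one scalar residual input $v$ and one coupling term entering at the bottom of that chain. In your construction the two residuals enter at the bottoms of \emph{two distinct} chains (the prolonged-input chain and the observer-error chain), and the only free input $v$ sits at the bottom of one of them; ``parallel-connecting the two internal models and letting $v$ be the sum of their outputs'' cannot cancel a term injected into the other chain. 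The paper resolves this by applying Theorem~\ref{TheoremFirst} \emph{twice, sequentially}: first in the robust asymptotic backstepping stage (Proposition~\ref{theorembackstepping}, using triplet (\ref{triplet1}) with $\zeta_{\ell_u}$ playing the role of $y$), which produces a partial-state feedback regulator with an LES attractor ${\rm graph}\,\tau'\times\{0\}$; and then in the extended dirty-derivatives observer stage (using triplet (\ref{triplet2}) with the rescaled observer error $e$ playing the role of $y$). Each stage has its own residual input and its own high-gain parameter, tuned in a nested order ($g$, then $\kappa$, then $L$); collapsing everything to a single gain, as you suggest, would break the semiglobal argument because the admissible set of initial observer errors and the saturation levels depend on the gains fixed at the earlier stages.

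A second, equally essential, omission is the propagation of the reproducibility property to the augmented slow dynamics. The rLER hypotheses concern triplets whose first entry is $A(z,\bar u(z))$ alone, but at each application of Theorem~\ref{TheoremFirst} the relevant ``$x$-block'' is the full interconnection (plant plus backstepping variables plus the previously designed internal model), and what must be LER is the triplet built on \emph{that} system with the coupling term restricted to the graph of $\tau$. The paper supplies this bridge with Proposition~\ref{Propx1x2} in Appendix~\ref{AppendixAuxRes}, which shows that rLER of the reduced triplet plus local exponential stability of the graph implies LER of the extended triplet; your sketch silently assumes this transfer. Finally, you do not introduce the saturation of the certainty-equivalence law ($\mathrm{sat}_\ell$, $\bar\varphi_\ell$, $\bar\rho_\ell$ in the paper), which is what prevents the high-gain observer peaking from causing finite escape and makes the interconnection terms $\Delta_1,\Delta_2$ uniformly bounded independently of $L$; without it the semiglobal claim does not follow.
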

\vspace{1mm}

  This result extends Theorem 1.1 of \cite{TP} in
  three directions.
  First, note that we are dealing with stabilization of compact attractors for
  systems evolving on {\em closed sets}.
  This is a technical improvement on which, though,  we would not like to put the
  emphasis.
  Second, note that the
  UCO control law $\bar u(z)$ is not required to be vanishing on the attractor
  $\ca$ which, as a consequence, is not required to be forward invariant for the
  open loop system (\ref{pla0}) with $u\equiv 0$. In this respect the proposed setting can be seen as also
  able to frame output regulation problems.
   Finally, the
  previous result claims  that, by means
  of a pure {\em locally Lipschitz} output feedback controller, we are able to restore the
  asymptotic properties of an UCO controller without relying upon
  exponential stability requirements of the latter and robustly with respect
  to uncertain parameters. The last two extensions are conceptually
  very much relevant and can be seen as particular application of the tools
  presented in the previous sections.
  Following the main laying of \cite{TP}, the proof of the claim is divided in
  two subsections which contain results interesting on their own.

\subsection{Robust Asymptotic Backstepping} \label{SectionRobAsyBac}

 In this part we discuss how the UCO control law $\bar u$ can be
 robustly back-step through  the chain of integrators of
 (\ref{plaextended}). As commented above, the forthcoming
 proposition extends in a not trivial way the results of \cite{TP} in the
 measure in which one considers the fact that $\bar u(z)$
 is not vanishing on the attractor and that $\ca$ is not necessarily
 locally exponential stable for the closed-loop system.

  We show that the existence of the static UCO stabilizer for (\ref{pla0})
 implies the existence of a dynamic stabilizer for (\ref{extendedzsys}) using the
 partial state $\xi_i$, $i=0,\ldots, \ell_u$, and the output derivatives $y^{(i)}$,
 $i=1,\ldots, n_y$. This is formally proved in the next
 proposition.

\begin{proposition}\label{theorembackstepping}
  Consider system (\ref{extendedzsys}) under the assumption (a) previously formulated.
 Assume that the triplet (\ref{triplet1}) is rLER.
 Then there exists a positive $\dimeta$,  a
 compact set ${\cal R} \supset \ca$, a continuous function
 $\tau: {\cal R} \to \Real^{\dimeta + \ell_u+1}$, and,
 for any compact set $\Xi' \subset \Real^{\ell_u+1}$
 and ${\cal N}' \subset \Real^\dimeta$, a locally Lipschitz regulator of the form
 \beeq{\label{reg1}
 \ba{rcl}
  \dot \eta &=& \varphi(\eta, \xi, \bar u(z)) \qquad \eta \in
  \Real^{\dimeta}\\
  \qquad  u_1 &=& \rho(\eta, \xi, \bar u(z))\,,
  \ea
 }
 with $\xi={\rm col}(\xi_0,\ldots, \xi_{\ell_u})$ such that the sets
  \beeq{\label{asyset}
  {\rm graph} \, \tau :=
  \{ (z,\xi,\eta) \in {\cal R} \times \Real^{\ell_u+1}
  \times \Real^{\dimeta}  \; :
  \; (\xi,\eta) = \tau(z)\}
  }
 and
 \[
 {\rm graph} \left . \tau \right |_{\ca}
 :=
  \{ (z,\xi,\eta) \in {\ca} \times \Real^{\ell_u+1}
  \times \Real^{\dimeta}  \; :(\xi,\eta) = \tau(z)\}
  \]
  are respectively LES(${\cal Z} \times {\Xi}' \times {\cal N}'$) and
 LAS(${\cal Z} \times {\Xi}' \times {\cal N}'$) for the closed-loop system
 (\ref{extendedzsys}), (\ref{reg1}).
\end{proposition}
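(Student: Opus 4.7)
The plan is to reduce the proposition to an application of Theorem~\ref{TheoremFirst} via a pre-backstepping change of coordinates and then exhibit the regulator following the recipe given in the Remark after that theorem. Setting $f_0(z):=A(z,\bar u(z))$ and introducing the error variables
\[
e_i := \xi_i - L_{f_0}^i\bar u(z),\qquad i=0,1,\ldots,\ell_u,
\]
direct differentiation along (\ref{extendedzsys}) gives
\[
\dot z = f_0(z)+g(z,e_0),\qquad \dot e_i = e_{i+1}+\Delta_i(z,e_0)\ \ (i<\ell_u),\qquad \dot e_{\ell_u}=u_1-L_{f_0}^{\ell_u+1}\bar u(z)+\Delta_{\ell_u}(z,e_0),
\]
with $g$ and each $\Delta_i$ vanishing identically at $e_0=0$ because $A(z,\xi_0)-A(z,\bar u(z))$ does. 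Thus, up to the extra perturbations $\Delta_i$ for $i<\ell_u$, the extended plant in $(z,e)$ coordinates fits the format of (\ref{plantstab+exo}) with $x=z$, $y=e$, control $v=u_1$, ``zero dynamics'' $\dot z=f_0(z)$ (for which $\mathcal{A}$ is LAS$(\mathcal{Z})$ by assumption~(a)), and coupling term whose non-vanishing component on $\mathcal{A}\times\{0\}$ is exactly $-L_{f_0}^{\ell_u+1}\bar u(z)$, i.e.\ the signal appearing in the rLER triplet~(\ref{triplet1}). The candidate steady-state graph is $\tau(z)=(\bar u(z),L_{f_0}\bar u(z),\ldots,L_{f_0}^{\ell_u}\bar u(z),T(z))$, where $T$ is the locally Lipschitz map supplied by Definition~\ref{DefinitionRegularitySoft} applied to~(\ref{triplet1}).

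Using $(\varphi,\psi,\gamma,T)$ as above, I would design the regulator following the Remark after Theorem~\ref{TheoremFirst}:
\[
\dot\eta = \varphi(\eta)-\psi(\eta)\bigl[\gamma(\eta)+\kappa B^{\mathrm T}A\,e\bigr],\qquad u_1=\gamma(\eta)+\alpha_\kappa(e),
\]
where $\alpha_\kappa(\cdot)$ is the standard high-gain stabiliser of the $e$-chain of integrators (after the usual scaling $e\mapsto D(\kappa)e$ it renders the unperturbed chain exponentially contracting with rate of order $\kappa$). A careful bookkeeping then shows that both the $\eta$-dynamics and $u_1$ can be expressed as locally Lipschitz functions of the measurable signals $\xi$, $\bar u(z)$ and of $\eta$: the only occurrence of the non-measurable $L_{f_0}^i\bar u(z)$ is inside $e=\xi-L_{f_0}^{\cdot}\bar u(z)$, and its effect on the closed loop enters only through the internal-model driving term $\gamma(\eta)+\kappa B^{\mathrm T}Ae$ and through the stabiliser $\alpha_\kappa(e)$; in both channels the unknown drift is absorbed into $\eta$, with the $\ell_u+1$ integrators of (\ref{extendedzsys}) themselves realising, for large $\kappa$, the missing components $L_{f_0}^i\bar u(z)$, $i\ge 1$, up to vanishing perturbations.

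The stability conclusion then follows by a cascade / vanishing-perturbation argument. With $z$ treated as an exogenous signal confined to the domain of attraction of $\mathcal{R}$, the $(e,\eta)$ subsystem is locally exponentially stable to $\{e=0,\eta=T(z)\}$ for $\kappa$ large enough, by combining the high-gain contraction of the $e$-chain with (\ref{condstabdef})--(\ref{condstabdefRob}); the \emph{robust} (rather than merely exponential) reproducibility is essential here, since it is through the ISS-type margin $\ell(\sup|\cdot|)$ that one dominates the intermediate perturbations $\Delta_i$, $i<\ell_u$, which can be made of order $1/\kappa$ after the high-gain scaling but do not fit the rigid structure of~(\ref{plantstab+exo}). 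Once $(e,\eta)$ is trapped near the steady-state manifold, the $z$-equation $\dot z=f_0(z)+g(z,e_0)$ is a vanishing perturbation of $\dot z=f_0(z)$, and standard total-stability arguments yield that $\mathcal{R}$ is LES and $\mathcal{A}$ is LAS with the required domains of attraction. The principal obstacle is precisely this domination of the intermediate $\Delta_i$'s: in (\ref{plantstab+exo}) the $x$-dependence of the coupling enters only in the last equation of the $y$-chain, while here it appears at every level, and this is exactly what forces the passage from LER in Theorem~\ref{TheoremFirst} to rLER in the present statement.
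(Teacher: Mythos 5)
Your overall strategy---error coordinates, an internal model built from the rLER triplet (\ref{triplet1}), and high gain on the resulting chain---is the right one, but two of the steps you wave through are precisely where the real work lies, and as written they fail.

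First, your coordinates $e_i=\xi_i-L_{f_0}^i\bar u(z)$ are not the ones that make the argument go through. They produce perturbations $\Delta_i(z,e_0)$ at \emph{every} level of the chain, and you propose to dominate these via the ISS margin in the definition of rLER. But that margin is a property of the internal-model subsystem $\dot\xi=\varphi(\xi)+\psi(\xi)[Q(z)+v]$ with respect to errors in its scalar driving signal; it says nothing about perturbations injected at intermediate integrators of the $e$-chain, and it cannot be used to absorb them. The paper sidesteps the issue entirely by using the \emph{recursive} backstepping coordinates $\tilde\xi_i=\xi_i-\bar u^{(i)}(z,\tilde\xi_0,\ldots,\tilde\xi_{i-1})$, under which the chain is an exact string of integrators and the only mismatch term, $\ell_g$, sits in the last equation; a further $g$-dependent linear change of coordinates then puts the system in the form (\ref{plantstab+exo}) with $\zeta_{\ell_u}$ playing the role of a scalar $y$, and Proposition \ref{Propx1x2} (this is where rLER is actually consumed) shows that the \emph{extended} zero dynamics in $(z,\zeta)$ with output $\ell_g(z,\zeta,0)$ is LER, so that Theorem \ref{TheoremFirst} applies verbatim. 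Your claim that rLER is ``exactly what forces'' the domination of the $\Delta_i$ misattributes its role.

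Second, your regulator is not of the admissible form (\ref{reg1}): it feeds back $e$, hence the quantities $L_{f_0}^i\bar u(z)$ for $i\ge 1$, which are functions of $z$ and not of $(\xi,\bar u(z))$. Saying that ``the unknown drift is absorbed into $\eta$'' and that the integrators ``realise the missing components for large $\kappa$'' is not an argument---$\eta$ reproduces only the scalar signal $-L_{f_0}^{\ell_u+1}\bar u(z)$, not the vector of intermediate derivatives. The paper devotes the entire second half of its proof to this point: it builds a dirty-derivatives observer (\ref{dirtyderReg}), driven by the measurable quantity $\tilde\xi_0=\xi_0-\bar u(z)$, with its own high-gain parameter $L$ and a saturation to keep the peaking-induced mismatches $\Delta_1,\Delta_2$ bounded uniformly in $L$, followed by a separate LES argument for the augmented system and an omega-limit-set argument to recover LAS of ${\rm graph}\,\tau|_{\ca}$. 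None of this is present in your proposal, and without it the controller you exhibit does not prove the statement.
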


\begin{proof}
 Consider the change of variables
 \[
 \ba{rcl}
 \xi_0 \; \to \tilde \xi_0 &:=& \xi_0 - \bar u(z)\\
 \xi_i \; \to \; \tilde \xi_i &:=& \xi_i -\bar u^{(i)}(z,\tilde \xi_0, \ldots, \tilde \xi_{i-1})
\quad i=1, \ldots, \ell_u
 \ea
 \]
 where the $\bar u^{(i)}(z)$, $i=1, \ldots, \ell_u$, are recursively defined as
 \[
 \ba{rcl}
 \bar u^{(1)}(z,\tilde \xi_0) &:=& \dst {\partial \bar u(z) \over \partial z} A(z,\tilde \xi_0 + \bar
 u(z))\\
 \bar u^{(i)}(z, \tilde \xi_0, \ldots, \tilde \xi_{i-1} ) &:=& \dst {\partial \bar u^{(i-1)}(z,\tilde \xi_0,\ldots
 \tilde \xi_{i-2}) \over \partial z} A(z,\tilde \xi_0 + \bar
 u(z)) + \\
 && \dst \sum_{j=0}^{i-2} {\partial \bar u^{(i-1)}(z,\tilde \xi_0,\ldots
 \tilde \xi_{i-2}) \over \partial \tilde \xi_j}\tilde \xi_{j+1} \quad\qquad
 i=2,\ldots,\ell_u\,,
 \ea
 \]
 and the further change of variable
  \[
  \ba{l}
  \tilde  \xi_i,\; \to \;  \zeta_i:=  g^{-i} \tilde \xi_i \qquad i=0,\ldots,
  \ell_u-1\\
  \tilde \xi_{\ell_u} \; \to \; \zeta_{\ell_u} := \tilde \xi_{\ell_u} - \dst \sum_{i=0}^{\ell_u -1}
  a_i g^{\ell_u-i} \tilde \xi_i
 \ea
  \]
  where $g$ is a positive design parameter and the $a_i$'s
  are coefficients of an Hurwitz polynomial.

  By letting $\zeta := \mbox{col} (\zeta_0, \ldots,
 \zeta_{\ell_u-1})$ system (\ref{extendedzsys}) in the new coordinates reads as
  \beeq{\label{syszxizeta}
  \ba{rcl}
   \dot z &=& A(z,\bar u(z)) + \tilde A(z,C\zeta)\\
   \dot {\zeta} &=& g H {\zeta} + B \zeta_{\ell_u}\\
   \dot \zeta_{\ell_u} &=& u_1 + \ell_g(z,\zeta, \zeta_{\ell_u})
     \ea
  }
  where $B=\mbox{col}(0,\ldots, 0,1)$, $C=(1,0,\ldots,0)$, $\tilde A(z,C\zeta) = A(z,\tilde \xi_0 + \bar u(z))
  -A(z,\bar u(z))$, $H$ is a Hurwitz matrix
  and $\ell_g(\cdot)$ is a smooth function such that
  \beeq{\label{ellprop}
  \ell_g(z,0,0)=-L_{A(z,\bar u(z))}^{(\ell_u+1)} \bar u(z) \qquad \forall \, z \in \Real^n\,.
  }
 As the triplet (\ref{triplet1}) is rLER, there exists a compact
 set ${\cal R} \supseteq \ca$ which is LES for $\dot z = A(z,\bar u(z))$
 with ${\cal D}({\cal R}) \supseteq {\cal D}(\ca)$. Furthermore, by the fact that (\ref{triplet1}) is rLER and
 by definition of rLER, also the triplet $(A(z,\bar u(z)), -L_{A(z,\bar
u(z))}^{(\ell_u+1)} \bar u(z), {\cal R})$ is rLER.
  We consider now the zero dynamics, with respect to the input $u_1$
 and output $\zeta_{\ell_u}$, of system (\ref{syszxizeta}) given
 by
 \beeq{\label{zdclosedloopproof1}
 \ba{rcl}
   \dot z &=& A(z,\bar u(z)) + \tilde A(z,C\zeta)\\
   \dot {\zeta} &=& g H {\zeta}
   \ea
 }
 For this system it can be proved (by means of arguments which, for instance, can be found in
 \cite{SICON}), that for any compact set ${\cal M} \in
 \Real^{\ell_u}$ there exists a $g^\star >0$ such that for all $g \geq
 g^\star$ the sets ${\cal R} \times \{ 0\}$ and $\ca \times \{0\}$ are
 respectively LES(${\cal Z} \times {\cal
 M}$) and  LAS(${\cal Z} \times {\cal
 M}$) for (\ref{zdclosedloopproof1}).   Fix, once for
all, $g \geq  g^\star$. By the previous facts, by (\ref{ellprop}),
by the fact that the triplet $(A(z,\bar u(z)), -L_{A(z,\bar
u(z))}^{(\ell_u+1)} \bar u(z), {\cal R})$ is rLER, and by
Proposition \ref{Propx1x2} in Appendix \ref{AppendixAuxRes},
 it follows that the triplet $ ((\ref{zdclosedloopproof1}),\, \ell_g(z,\zeta,
 0),\, {\cal R} \times \{ 0\})$ is LER. Now fix
  \beeq{\label{controlu}
 u_1 = - \kappa \zeta_{\ell_u} + v
 }
 where $\kappa$ is a positive design parameters and $v$ is a
 residual control input.
 From the previous results, it follows that
 system (\ref{syszxizeta}) with (\ref{controlu}) fits in the framework of
 Theorem \ref{TheoremFirst} , by which it is possible to conclude that there exists a locally Lipschitz
 controller of the form
 \beeq{\label{etasysprime}
  \dot \xi = \Phi_k'(\xi, \zeta_{\ell_u}) \qquad v=\Upsilon_k'(\xi,\zeta_{\ell_u})
  \qquad \xi \in \Real^\dimxi\,,
  }
  a continuous function $\tau': {\cal R}\times \{0\} \to \Real^{\dimxi}$
  and, for any compact set ${\cal M}_{\ell_u} \subset
  \Real$ and ${\cal N}' \subset \Real^{\dimxi}$, a positive
  constant $\kappa^\star$, such that for all $\kappa \geq
  \kappa^\star$ the set
  \[
 {\rm graph }\tau' \times \{0\} = \{((z,\zeta),\zeta_{\ell_u},\xi) \in
  ({\cal R} \times \{0\}) \times \Real \times \Real^{\dimxi} \quad : \quad
  \zeta_{\ell_u}=0\,, \; \xi = \tau'(z,\zeta)\}
   \]
 is LES(${\cal Z} \times {\cal M} \times {\cal M}_{\ell_u} \times
{\cal N}'$) for (\ref{syszxizeta}), (\ref{controlu}) and
  (\ref{etasysprime}).
 Furthermore, by properly adapting the arguments at the end of the
 proof of Theorem \ref{TheoremFirst}, it is possible also to prove
 that the set ${\rm graph }\left . \tau'\right |_{\ca \times \{0\}} \times
  \{0\}$ is LAS(${\cal Z} \times {\cal M} \times {\cal M}_{\ell_u} \times {\cal N}'$)
  for (\ref{syszxizeta}), (\ref{controlu}) and
  (\ref{etasysprime}).

 The previous facts have shown how to solve the problem at hand by
 means of a {\em partial state feedback} regulator (namely a regulator processing
 $\tilde \xi_0 = \xi_0 - \bar u(z)$  and its first $\ell_u$ time derivatives $\tilde \xi_i$) of the form
 \beeq{\label{partialstatefeedbackreg}
 \ba{rcl}
  \dot \xi &=& \Phi'_k(\xi, \zeta_{\ell_u})\\
  u_1 &=& -\kappa \zeta_{\ell_u} + \Upsilon'_k(\xi, \zeta_{\ell_u}) \qquad
  \zeta_{\ell_u} = \tilde \xi_{\ell_u} - \sum_{i=0}^{\ell_u-1} a_i
  g^{\ell_u-1} \tilde \xi_i \,.
  \ea
 }
 In order to obtain a pure output feedback regulator of the form
 (\ref{reg1}), we follow \cite{TP} and design a "dirty derivatives"
 observer-based  regulator
 \beeq{\label{dirtyderReg}
  \ba{rcl}
  \dot {\hat {\tilde \xi}}_{i} &=& \hat {\tilde \xi}_{i+1} + L^{i+1} \lambda_i (\hat {\tilde \xi}_0 -
  \tilde \xi_0)\qquad i=0, \ldots \ell_u-1\\
 \dot {\hat {\tilde \xi}}_{\ell_u} &=& L^{\ell_u+1} \lambda_r (\hat {\tilde \xi}_0 -
  \tilde \xi_0)\\[1mm]
  \dot \xi &=& \Phi'_k(\xi, \hat {\zeta}_{\rm sat})\\
  u &=& -\kappa \hat \zeta_{\rm sat} + \Upsilon'_k(\xi, \hat{\zeta}_{\rm sat})
  \ea
  }
 where $L$ is a positive design parameters, the $\lambda_i$'s are such that the polynomial
 $s^{\ell_u} + \lambda_{\ell_u} s^{\ell_u-1} + \ldots + \lambda_2 s + \lambda_1$ is
 Hurwitz and where
 \[
  \hat {\zeta}_{\rm sat} = \mbox{sat}_\ell(\hat {\tilde \xi}_{\ell_u} - g^{\ell_u-1} a_0 \hat {\tilde \xi}_1
  -
  \ldots - g
 a_{\ell_u-1} \hat {\tilde \xi}_{\ell_u-1})
 \]
 in which $\mbox{sat}_\ell(s)$ is the saturation function such
 that $\mbox{sat}_\ell(s)=s$ if $|s| \leq \ell$ and
 $\mbox{sat}_\ell(s)=\ell \, \mbox{sgn}(s)$ otherwise.
 Letting $\tilde \xi=\mbox{col}(\tilde \xi_1,\ldots, \tilde \xi_{\ell_u})$,
 $\hat {\tilde \xi} = \mbox{col}(\hat {\tilde \xi}_1,\ldots, \hat
 {\tilde \xi}_{\ell_u})$, and defining the change of variables
 \[
  \hat {\tilde \xi} \; \mapsto \; e := D_L (\tilde \xi - \hat {\tilde \xi})
 \]
 in which $D_L=\mbox{diag}(L^{\ell_u-1}, \ldots, L,1)$, it turns out
 that the overall closed-loop system (\ref{syszxizeta}),
 (\ref{dirtyderReg}) reads as
 \beeq{\label{xesysproof1}
  \ba{rcl}
  \dot x &=& \varphi(x) + \Delta_1(x,e)\\
  \dot e &=& L H e + \Delta_2(x,e)
 \ea
 }
 where
 $
  x :=  \mbox{col}(z, \zeta, \zeta_{\ell_u}, \xi)
 $,
 $\dot x = \varphi(x)$ is a compact representation of
 (\ref{syszxizeta}), (\ref{partialstatefeedbackreg}), $H$ is
 a Hurwitz matrix and $\Delta_1(\cdot)$ and $\Delta_2(\cdot)$ are
 defined as
 \[
 \Delta_1(\cdot)=\left(\ba{c}0\\0\\ \kappa(\zeta_{\ell_u} - \hat {\zeta}_{\rm sat}) +
 \Upsilon'_k(\xi,\hat {\zeta}_{\rm sat}) -  \Upsilon'_k(\xi,{\zeta}_{\ell_u})  \\
 \Phi_k'(\xi,\hat {\zeta}_{\rm sat} ) -   \Phi_k'(\xi,{\zeta}_{\ell_u} ) \ea\right )
 \]
 and
 \[
 \Delta_2(\cdot) = \left( \ba{c}
 0\\
 \vdots\\
 0\\
  \kappa(\zeta_{\ell_u} - \hat {\zeta}_{\rm sat}) +
 \Upsilon'_k(\xi,\hat {\zeta}_{\rm sat}) -  \Upsilon'_k(\xi,{\zeta}_{\ell_u})
 \ea
 \right )
 \]
  By construction the set ${\rm graph}\tau' \times \{0\}$ is
  LES($\cal X$) for the system $\dot x = \varphi(x)$
  with ${\cal X}:= ({\cal Z} \times {\cal M}) \times {\cal M}_{\ell_u} \times {\cal
  N}'$ and, by construction, it turns out that for any $\ell>0$,
  $\Delta_1(x,0)\equiv 0$ and $\Delta_2(x,0)\equiv 0$ for all $x \in {\rm graph}\tau' \times
  \{0\}$. Furthermore, for any compact ${\cal M} \in
  \Real^{\ell_u-1}$, ${\cal M}_{\ell_u} \in \Real$ and $\hat{\cal M} \in
  \Real^{\ell_u}$,  there exists a compact set ${\cal E} \subset \Real^{\ell_u}$ ({\em dependent on $L$}) such that
  if $\zeta(0) \in {\cal M}$, $\zeta_{\ell_u} \in {\cal
  M}_{\ell_u}$ and $\hat {\tilde \xi} (0) \in \hat {\cal M}$ then $e(0) \in \cal E$. Furthermore,
  by definition of saturation function, it turns out that for all $\bar x>0$ there exist $\delta_1>0$
  and $\delta_2>0$ such that  $|\Delta_1(x,e)|\leq \delta_1$ and $\Delta_2(x,e) \leq \delta_2$
  for all $x$, $|x| \leq \bar x$, $e \in \Real$ and  $L>0$.

  From these facts and by the result in \cite{TP}, it follows that for any ${\cal E} \in
  \Real^{\ell_u}$ there exists a $L^\star>0$ such that for all
  $L \geq L^\star$ the set
  \[
  {\rm graph} \tau' \times \{0\} \times \{0\}
  = \{((z, \zeta), \zeta_{\ell_u}, \xi, e) \in ({\cal R} \times \{0\})
  \times\Real \times
   \Real^\dimxi \times \Real^{\ell_u} \; : \; \zeta_{\ell_u} =0 \,, \; \xi = \tau'(z,\zeta)\,, \; e =0\}
  \]
 is LES(${\cal Z} \times {\cal M} \times {\cal M}_{\ell_u} \times {\cal
  N}' \times \cal E$).

 From the previous results, the fact that $\ca$ is LAS for the system $\dot z
 = A(z,\bar u(z))$, and the fact that on ${\rm graph} \tau' \times \{0\}
 \times \{0\}$ the closed-loop dynamics is described by
 $\dot z = A(z,\bar u(z))$), the desired result follows by
 properly adapting the omega-limit set arguments used at the end of the proof of Theorem \ref{TheoremFirst}.

\end{proof}

 \subsection{Extended Dirty Derivatives Observer}
 In this part we present  a result which allows one to obtain a
 pure output feedback stabilizer once a partial state-feedback
 stabilizer (namely a stabilizer processing the output and a certain number of its
 time derivative) is known. Along the lines pioneered in
 \cite{EK} and \cite{TP}, the idea is to substitute the
 knowledge of the time derivatives of the output with
 appropriate estimates provided by a "dirty derivative observer" (by using the terminology of \cite{TP}).
 In our context, though, we propose an "extended" dirty derivative
 observer, where the adjective "extended" is to emphasize the
 presence of a dynamic extension of the classical observer
 structure motivated by the need of handling the presence of
 possible not exponentially stable attractors in the partial-state feedback
 loop and the fact that, on this attractor, the measured output is not necessarily vanishing.

 More specifically we assume, for the system (\ref{pla0}), the existence of a dynamic stabilizer of
 the form
  \beeq{\label{varsigmareg}
  \ba{rcl}
  \dot \varsigma &=& \bar \varphi(\varsigma, y,  y^{(1)}, \ldots, y^{(n_y)}) \quad \varsigma \in \Real^\dimvarsigma\\
  u &=& \bar \rho(\varsigma, y, y^{(1)}, \ldots, y^{(n_y)})
  \ea
  }
  such that the following property hold for the closed-loop system:

  \begin{itemize}
  \item[a)]  there exists a compact set ${\cal R} \supset \ca$ and a
 continuous function $\tau: {\cal R} \to \Real^\dimvarsigma$ such
 that the sets ${\rm graph} \tau$ and ${\rm graph} \left . \tau \right
|_{\ca}$ are respectively LES(${\cal Z}  \times {\cal H}$) and LAS(${\cal Z}  \times {\cal H}$) for
the closed-loop system (\ref{pla0}), (\ref{varsigmareg}) for some compact set ${\cal H} \subset
\Real^\dimvarsigma$;

 \item[b)] there exist smooth functions $C_i$, $i=0,\ldots, n_y+1$, such that
 the output derivatives $y^{(i)}$ of the closed-loop system (\ref{pla0}),
 (\ref{varsigmareg}) can be expressed as $y^{(i)} =
 C_i(z,\varsigma)$, $i=0,\ldots, n_y+1$ and the following holds
 \[ \left . \bar \rho(\varsigma, y, y^{(1)}, \ldots, y^{(n_y)})
\right |_{{\rm graph} \tau} =\bar u(z)\,.
\]

 \end{itemize}

 \begin{remark}
 Note that the previous conditions are automatically satisfied under the
 assumptions of Section \ref{SecUCO} and by virtue of the results presented in the previous
 section. As a matter of fact, by bearing in mind (\ref{extendedzsys}) and Theorem
 \ref{theorembackstepping} (and specifically (\ref{reg1})), the
 main outcome of the previous Section has been to design a
 dynamic controller of the form
 \beeq{\label{reggg}
 \ba{rcl}
 \dot \xi_0 &=& \xi_1\\
         &\vdots&\\
         \dot \xi_{\ell_u} &=& \rho(\eta, \xi, \bar u(z))\\
 \dot \eta &=& \varphi(\eta, \xi, \bar u(z)) \\[1mm]
 u &=& \xi_0
 \ea
 }
 in which, according to (\ref{ubarnot}) and to the definition of
$\ell_u$,
 \beeq{\label{ubarxi}
 \bar u(z) = \Psi(y, y^{(1)}, \ldots, y^{(n_y)}, \xi_0, \ldots,
 \xi_{n_y})\,.
 }
 System (\ref{reggg}), (\ref{ubarxi}) is clearly in the form (\ref{varsigmareg}) and, according to the result
 of Theorem \ref{theorembackstepping}, the previous conditions (a)-(b) are satisfied. $\triangleleft$
 \end{remark}

 Within the previous framework we are able to prove the following
proposition which, along with Proposition \ref{theorembackstepping} and the previous remark,
immediately yields Theorem \ref{TheoremMainUCO}.\\[1mm]

 \begin{proposition}
 Consider system (\ref{pla0}) and assume the existence of a
dynamic regulator of the form (\ref{varsigmareg}) such that the
previous properties (a)-(b) are satisfied. Assume, in addition,
that the triplet (\ref{triplet2}) is rLER. Then
 there exist a positive $\dimzeta$, a compact set ${\cal B} \subset \Real^\dimzeta$ and,
 for any compact set ${\cal N} \subset \Real^\dimzeta$, an output
 feedback controller of the form (\ref{finalregUCO}) such that the set $\ca \times \cal B$ is
LAS(${\cal Z} \times \cal N$) for the closed-loop system
(\ref{pla0}), (\ref{finalregUCO}).
 \end{proposition}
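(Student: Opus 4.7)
The plan is to replace the unmeasured derivatives $y^{(1)},\ldots,y^{(n_y)}$ in (\ref{varsigmareg}) by estimates produced by a classical high-gain dirty derivatives observer, augmented by an internal model derived from the rLER hypothesis on triplet (\ref{triplet2}). Denoting by $(\varphi,\psi,\gamma,T)$ the locally Lipschitz data provided by Definition \ref{DefinitionRegularitySoft} applied to (\ref{triplet2}), I would use a chain of integrators $\dot{\hat y}_i = \hat y_{i+1} + L^{i+1}\lambda_i(\hat y_0 - y)$ for $i=0,\ldots,n_y-1$, and close the chain through the state of an rLER extension by setting $\dot{\hat y}_{n_y} = -\gamma(\xi) + L^{n_y+1}\lambda_{n_y}(\hat y_0 - y)$ and $\dot\xi = \varphi(\xi) + \psi(\xi)[-\gamma(\xi) + L^{n_y+1}\lambda_{n_y}(\hat y_0 - y)]$, where $L>0$ is the high-gain tuning parameter and the $\lambda_i$ are chosen so that the associated companion matrix $H$ is Hurwitz. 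The controller is completed by feeding a saturated version of $(\hat y_0,\ldots,\hat y_{n_y})$ into the given partial-state regulator (\ref{varsigmareg}), so that $\zeta := (\varsigma,\hat y_0,\ldots,\hat y_{n_y},\xi)$ is the state of an output-feedback controller of the form (\ref{finalregUCO}).

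In the scaled error coordinates $e_i := L^{n_y-i}(\hat y_i - y^{(i)})$, and using assumption (b) which expresses each $y^{(i)}$ as a smooth function of $(z,\varsigma)$, the closed loop may be written as a cascade in which the upper block $(z,\varsigma,\xi)$ is the given partial-state closed loop augmented by the rLER extension driven by $y^{(n_y+1)}$, while the lower block obeys a linear high-gain dynamics $\dot e = LHe + B[-\gamma(\xi) - y^{(n_y+1)}]$ plus locally Lipschitz residues vanishing with $e$. On the target set $\{\varsigma=\tau(z),\,\xi = T(z),\,e=0\}$, assumption (b) identifies $y^{(n_y+1)}$ with $Q(z):=L_{A(\cdot,\bar u(\cdot))}^{(n_y+1)}C(z)$, and the identity $Q(z)+\gamma(T(z))=0$ from (\ref{Timmdef}) makes the forcing of $e$ vanish; on the same set the $(z,\varsigma)$-flow coincides with (\ref{pla0})--(\ref{varsigmareg}), whose graph $\tau$ is LES on ${\cal R}$ by assumption (a) and the rLER property.

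At this point the analysis exactly parallels the proof of Theorem \ref{TheoremFirst}, with $(z,\varsigma,\xi)$ playing the role of the ``$x$'' subsystem and $e$ the role of the high-gain ``$y$'' output: for sufficiently large $L$, standard singular-perturbation / high-gain estimates deliver local exponential stability of the invariant set described above on any prescribed compact set of initial conditions, and a final $\omega$-limit-set argument (as at the end of the proof of Theorem \ref{TheoremFirst}) upgrades this to local asymptotic stability of the smaller set corresponding to $\ca$, using LAS of $\ca$ for $\dot z = A(z,\bar u(z))$ from (a). The main technical obstacle I anticipate is controlling the peaking of the innovation $L^{n_y+1}\lambda_{n_y}(\hat y_0 - y)$ entering the $\xi$-dynamics during the initial transient: the \emph{robust} version of LER in (\ref{condstabdefRob}) is essential here, since it lets the peaking be absorbed as a class-$\cal K$ disturbance on $\xi$, while the saturation on the $(\hat y_i)$ fed into $(\varsigma,u)$ prevents peaking from contaminating the already-stabilized partial-state closed loop, in the same spirit as the $\mbox{sat}_\ell$ device used in the proof of Proposition \ref{theorembackstepping}.
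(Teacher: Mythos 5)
Your construction is essentially the paper's: an extended dirty-derivatives observer whose last integrator is closed through an internal model built from the rLER data of (\ref{triplet2}), saturation to keep peaking out of the partial-state loop, scaled error coordinates putting the closed loop into the cascade form of Theorem \ref{TheoremFirst} with $L$ as the high-gain parameter, and a final $\omega$-limit argument to pass from LES of the graph over ${\cal R}$ to LAS of the set over $\ca$. (The paper keeps a residual input $v$ on the last observer equation and then invokes Theorem \ref{TheoremFirst} as a black box; your regulator is that theorem's controller written out explicitly, so this is a presentational difference only.)

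There is, however, one step you assert rather than prove, and it is the nontrivial one. The rLER hypothesis on (\ref{triplet2}) concerns an internal model driven by $Q(z)=L^{(n_y+1)}_{A(z,\bar u(z))}C(z)$ along the \emph{reduced} dynamics $\dot z = A(z,\bar u(z))$, with $z$ in a set ${\cal R}'$ that is LES for those dynamics. What your upper block actually requires is exponential attractivity of $\{\varsigma=\tau(z),\,\xi=T(z)\}$ for the $\xi$-dynamics driven by $q(x)=C_{n_y+1}(z,\varsigma)$ along the full closed-loop flow $\dot x=f(x)$; the two driving signals agree only on ${\rm graph}\,\tau$. Bridging this is exactly Proposition \ref{Propx1x2} of the paper: one writes $q(x(t))=\bar q(\bar z(t))+v(t)$ with $\bar z(t)$ a reduced trajectory in $\ca$ and $v(t)$ a bounded mismatch, uses the ISS estimate (\ref{condstabdefRob}) to obtain ultimate boundedness of $\xi$, localizes the $\omega$-limit set inside ${\rm graph}\,T|_{{\cal R}''}$ (with ${\cal R}''={\cal R}\cap{\cal R}'$ — an intersection you also need, since assumption (a) and the rLER definition supply a priori different sets), and then recovers the exponential rate via a composite Lyapunov function. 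This is also where the \emph{robust} part of rLER is actually consumed — absorbing the off-graph mismatch in the driving signal — rather than in the peaking transient, which is already neutralized by the saturation (Theorem \ref{TheoremFirst} itself only asks for LER of the lifted triplet). Without this lemma, your claim that the target set is LES for the augmented upper block "by assumption (a) and the rLER property" does not follow.
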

\vspace{2mm}

\begin{proof}
 As candidate controller, we consider a system
of the form
\[
   \ba{rcl}
  \dot \varsigma &=& \bar \varphi_\ell(\varsigma, y, \hat y_1, \ldots, \hat
y_{n_y})\\[2mm]
  \dot {\hat y}_i &=& \hat y_{i+1} + L^{i+1} \lambda_i (\hat
  y_0- y)\\
  \dot {\hat y}_{n_y} &=& L^{n_y +1} \lambda_{n_y} (\hat
y_0-y) + v\\[2mm]
   u &=& \bar \rho_\ell(\varsigma, y, \hat y_1, \ldots, \hat y_{n_y})\\
  \ea
\]
in which $v$ is a control input to be designed, $L$ is a positive
design parameters, the $\lambda_i$'s are the coefficients of an
Hurwitz polynomial and $\bar \varphi_\ell(\cdot)$ and $\bar
\rho_\ell$ are appropriate saturated versions of the functions
$\bar \varphi(\cdot)$ and $\bar \rho(\cdot)$ of
(\ref{varsigmareg}) satisfying $ \bar \varphi_\ell(s) = \bar
\varphi(s)$ if $|\bar \varphi(s)|\leq \ell$, $|\bar
\varphi_\ell(s)|\leq \ell$ for all $s$, and $ \bar \rho_\ell(s) =
\bar \rho(s)$ if $|\bar \rho(s)|\leq \ell$, $|\bar
\rho_\ell(s)|\leq \ell$ for all $s$, with $\ell$ a design
parameter.

 Let now $y_d=\mbox{col}(y, y^{(1)}, \ldots, y^{(n_y)})$, $\hat y =
 \mbox{col}(\hat y_0, \hat y_1,$ $ \ldots, \hat y_{n_y})$ and consider the change
 of variables
 $
 \hat y \; \mapsto \; e =
  D_L (y_d - \hat y)
  $
where
 $D_L= \mbox{diag}( L^{n_y}, \, L^{n_y-1},\, \ldots,\, 1 )$. In this
 coordinate setting, by denoting
 $
  {x} = \mbox{col}(\ba{cc} z & \varsigma \ea   )
 $,
 the overall closed-loop system reads as
  \beeq{\label{closedloopcompact}
 \ba{rcl}
 \dot x &=& {f}(x) + \Delta(x,e)\\
 \dot e &=& L H e + B ({q}(x) + v)
 \ea
 }
  in which $H$ is a Hurwitz matrix in observability canonical form,
  $B=\left(\ba{cccc}0 & \ldots & 0& 1  \ea \right )\tr$
  $\dot x = {f}(x)$ is a compact representation of the
  system (\ref{pla0}), (\ref{varsigmareg}), ${q}(x) = C_{n_y
+1}(z,\varsigma)$ and

 \[
  \Delta(x,e) = \left ( \ba{c} 0\\
 A(z,\bar \rho_\ell(\varsigma, y, \hat y_{1}, \ldots, \hat y_{n_y}) )
-A(z, \bar \rho(\varsigma, y, y^{(1)}, \ldots, y^{(n_y)}) )\\
 \bar \varphi_\ell(\varsigma, y, \hat y_{1}, \ldots, \hat y_{n_y}) ) - \bar \varphi(\varsigma,
y, y^{(1)}, \ldots, y^{(n_y)})  ) \ea \right )
 \]
in which, in the latter, we have left the "original" coordinates
$\hat y_i$ for notational convenience.

By definition of $\Delta$, of $\bar \rho_\ell$ and of $\bar
\varphi_\ell$, it turns out that for any bounded set ${\cal Z}'_M$
and ${\cal H} \subset \Real^{\dimxi + \dimeta}$, there exists an
$\ell^\star$ such that for any $\ell \geq \ell^\star$
\[
 \Delta(x, 0) =0 \qquad \forall \; x \in {\cal Z}'_M \times {\cal
H}\,.
\]

Furthermore, by the fact that the triplet (\ref{triplet2}) is rLER it follows that there exists a
compact set ${\cal R}' \supset \ca$ which is LES for $\dot z = A(z,\bar u(z))$ with ${\cal D}({\cal
R}') \supseteq {\cal D}(\ca)$. Let ${\cal R}'' := {\cal R} \bigcap {\cal R'}$. By item (a) above
and by going throughout the proof of Proposition \ref{theorembackstepping}, it turns out that ${\rm
graph} \left . \tau \right |_{{\cal R}''}$ is LES for $\dot x = f(x)$. Moreover, the set ${\cal
R}''$ is LES for $\dot z = A(z,\bar u(z))$ with ${\cal D}({\cal R}'') \supseteq {\cal D}(\ca)$ and,
by definition of rLER, it is possible to claim that the triplet $(A(z,\bar u(z)), L_{A(z,\bar
u(z))}^{(n_y+1)} C(z), {\cal R}'')$ is rLER. From the previous facts, from the item (b) above,
which implies that
\[
 \left . {q}(x) \right |_{{\rm graph} \left . \tau \right |_{{\cal R}''}} = C_{n_y +1}(z,
\tau(z))= L_{A(z, \bar u(z))}^{(n_y+1)}C(z)\,,
\]
 and by Proposition \ref{Propx1x2} in Appendix \ref{AppendixAuxRes}, it follows that the triplet
\[
  ({f}(x), {q}(x), {\rm graph} \left . \tau \right |_{{\cal R}''})
\]
 is LER.
Thus system (\ref{closedloopcompact}) fits in the framework of
Theorem \ref{TheoremFirst} (with graph$\left . \tau \right
|_{{\cal R}''}$ playing the role of the set $\ca$) by which  the
result follows (by using the fact that graph$\left . \tau \right
|_\ca$ is LAS for $\dot x = f(x)$ and by adapting the omega-limit
set arguments at the end of the proof of Theorem
\ref{TheoremFirst}). $\triangleleft$

\end{proof}


 \section{Sufficient conditions for exponential reproducibility}\label{SecSuff}

 Having established with Theorems \ref{TheoremFirst} and \ref{TheoremMainUCO} the interest of local exponential
reproducibility for solving the problem of (robust) output
feedback stabilization via a locally Lipschitz regulator, in this
section we present a number of results which are useful to test
when a triplet $(F,Q,\ca)$ is rLER (and thus LER) and, eventually,
to design the functions $(\varphi, \psi,\gamma)$.


   As also commented in Section \ref{SecSuff}, the first
 requirement behind the definition is the existence of a compact set ${\cal
 R} \supseteq \ca$ which is LES for $\dot z = F(z)$. In this
 respect we present a result which
 claims that the existence of a set $\cal R$ which is LES for $\dot z = F(z)$ is
 automatically guaranteed if the set $\ca$ is LAS for $\dot z=F(z)$.
 Thus, put in the context of Theorem \ref{TheoremFirst}, the first
 requirement of the definition is not restrictive at all. Details of this fact are reported in the
 following proposition whose proof can be found in \cite{MPBook}.\\[1mm]
\begin{lemma}\label{toolPropositionAp}
 Consider system
 \beeq{\label{toolsys2}
  \dot z = F(z) \qquad z \in \Real^\dimz
 }
 evolving on an invariant closed set ${\cal C}_\dimz \subset \Real^\dimz$. Let $\ca \subset {\cal C}_\dimz$
 be a compact set which is LAS with domain of attraction ${\cal
 D}(\ca) \subseteq {\cal C}_\dimz$. For any compact set ${\cal S} \subset {\cal C}_\dimz$ such that
 $\ca \subset {\rm int}\cal S$, there exists a compact set ${\cal R}$ satisfying $\ca \subseteq {\cal R}
 \subset {\cal S}$ which is LES for (\ref{toolsys2})
 with domain of attraction ${\cal D}({\cal R})\equiv{\cal D}(\ca)$.
\end{lemma}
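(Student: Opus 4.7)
The plan is to take $\cal R$ to be a sublevel set of a smooth Lyapunov function attached to $\ca$. By a standard converse Lyapunov theorem for compact asymptotically stable sets (applied to $\dot z = F(z)$ on the closed invariant set ${\cal C}_\dimz$ endowed with its relative topology), there exists a smooth function $V$ defined on an open neighborhood $\cal U$ of $\ca$ in ${\cal D}(\ca)$ with $V\equiv 0$ on $\ca$, $V(z)>0$ on ${\cal U}\setminus \ca$, and $L_F V(z)<0$ on ${\cal U}\setminus \ca$. I would then pick $c>0$ small enough that the connected component of $\{z\in{\cal U}:V(z)\leq c\}$ containing $\ca$, which I take as $\cal R$, is compact and contained in $\mbox{int}\,{\cal S}$; such a $c$ exists because $V$ is continuous, $\ca$ is compact, and $\ca\subset \mbox{int}\,{\cal S}$.

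Next I would verify the structural properties of $\cal R$. Positive invariance follows from $L_F V<0$ on the compact boundary $\partial{\cal R}\subset\{V=c\}$. For the domain of attraction, every $z_0\in {\cal D}(\ca)$ generates a trajectory that eventually enters $\cal R$ (since it converges to $\ca\subset \mbox{int}\,{\cal R}$), giving ${\cal D}(\ca)\subseteq {\cal D}({\cal R})$; conversely, any trajectory attracted to $\cal R$ eventually enters $\cal R\subset {\cal D}(\ca)$ and is then attracted to $\ca$, so ${\cal D}({\cal R})={\cal D}(\ca)$.

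The final step is the local exponential decay of $|z(t)|_{\cal R}$. Because $L_F V<0$ on the compact set $\partial\cal R$, there exist $\delta>0$ and $\mu>0$ with $L_F V(z)\leq -\mu$ on a shell ${\cal U}_\delta:=\{z:\, 0<|z|_{\cal R}\leq \delta\}$ just outside $\cal R$. Furthermore, standard regularity gives positive constants $k_1,k_2$ with $k_1|z|_{\cal R}\leq V(z)-c\leq k_2|z|_{\cal R}$ on this shell, as a consequence of the smoothness of $V$ together with the non-degeneracy of $V$ at $\partial\cal R$ forced by $L_F V<0$. Combining these estimates produces a linear-in-time descent $V(z(t))-c\leq (V(z_0)-c)-\mu t$ while the trajectory remains in the shell, and hence finite arrival in $\cal R$ within a time $t^*\leq K|z_0|_{\cal R}$ for a suitable $K$. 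Since $|z(t)|_{\cal R}\leq |z_0|_{\cal R}$ for $t\leq t^*$ and vanishes afterwards, choosing $N,\lambda>0$ and a radius $d>0$ with $Ne^{-\lambda Kd}\geq 1$ yields $|z(t)|_{\cal R}\leq Ne^{-\lambda t}|z_0|_{\cal R}$ for all $|z_0|_{\cal R}\leq d$, which is exactly the locally exponential decay required by LES. The most delicate point I anticipate is the converse Lyapunov construction itself on the closed (rather than open) invariant set ${\cal C}_\dimz$ when $\ca$ is only asymptotically, not exponentially, stable; once $V$ is in hand the remaining steps reduce to routine compactness arguments.
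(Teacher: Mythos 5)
The paper does not prove this lemma in its own text: it defers entirely to \cite{MPBook}, and the argument there is essentially the one you give --- take ${\cal R}$ to be a compact sublevel set of a smooth converse-Lyapunov function for $\ca$ contained in ${\rm int}\,{\cal S}$, show that trajectories starting at distance $|z_0|_{\cal R}$ enter the (positively invariant) set ${\cal R}$ within a time $t^\ast \leq K|z_0|_{\cal R}$, and convert finite-time arrival plus a uniform overshoot bound into the local exponential estimate. Your write-up is correct apart from one slip: the claim $|z(t)|_{\cal R}\leq |z_0|_{\cal R}$ for $t\leq t^\ast$ does not follow as stated, since the distance to ${\cal R}$ need not decrease monotonically along trajectories; however, your own two-sided bound $k_1|z|_{\cal R}\leq V(z)-c\leq k_2|z|_{\cal R}$ combined with the monotonicity of $V$ yields $|z(t)|_{\cal R}\leq (k_2/k_1)\,|z_0|_{\cal R}$ on $[0,t^\ast]$, which serves the same purpose once $k_2/k_1$ is absorbed into $N$ (and also guarantees the trajectory cannot exit the shell outward before reaching ${\cal R}$). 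The genuinely delicate points you flag --- the converse Lyapunov construction on the closed invariant set ${\cal C}_\dimz$ with its relative topology, and the lower bound $k_1|z|_{\cal R}\leq V(z)-c$, which rests on $\nabla V\neq 0$ on a compact shell around $\partial{\cal R}$ as forced by $L_FV<0$ there --- are exactly the ones the cited reference has to handle, and both go through.
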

\vspace{2mm}

  We pass now to analyze the second crucial requirement behind the definition of rLER,
  namely the existence of {\em locally Lipschitz} functions $(\varphi, \psi, \gamma)$
 and $T$ such that conditions (\ref{Timmdef}) and (\ref{condstabdefRob}) are satisfied
 for system (\ref{xisysdefRob}).
 Being the property in question related to the ability of
reproducing any signal $Q(z(t))$ generated by the system $\dot
z(t) = F(z(t))$ by taking its initial conditions in the set $\cal
R$, it is not surprising that the theory of nonlinear output
regulation, and specifically the design techniques proposed in the
related literature to construct internal models, can be
successfully used to this purpose (see \cite{CASYMI}). In
particular, in the following, we present two techniques which are
directly taken, with minor adaptations, from the literature of
output regulation.

First, we follow \cite{BI03bis} and we present a method which
draws its inspiration from high-gain design techniques of
nonlinear observers. Specifically it is possible to state the
following proposition which comes from Lemma
\ref{toolPropositionAp} and from minor adaptations\footnote{The
 adaptation consists only in proving the ISS property which is
 behind the definition of rLER.} of the main result of \cite{BI03bis} (see the quoted work for the proof).

 \begin{proposition}\label{PropositionPhase}
  Let $F: \Real^\dimz \to \Real^\dimz$ and $Q: \Real^\dimz \to
  \Real$ be given  smooth functions and $\ca \subset \Real^m$ be a given compact set which is LAS for
  $\dot z = F(z)$. Assume, in addition, that there exist a $\tilde \dimz >0$, a
compact set $\cal S$ such that $\ca \subset {\rm int } \cal S$ and
a locally Lipschitz function $f: \Real^{\tilde \dimz} \to \Real$
such that the following differential equation holds
 \beeq{\label{immobser}
 L_{F(z)}^{\tilde \dimz} Q(z) = f(\,Q(z), \,L_{F(z)}Q(z), \ldots,  \,L_{F(z)}^{\tilde \dimz -1}Q(z)\,)
 \qquad \forall \, z \in {\cal S}\,.
 }
  Then the triplet $(F,Q,\ca)$ is rLER. In particular $(\varphi,\psi,\gamma)$ can be taken as
  the functions $\varphi: \Real^{\tilde \dimz} \to \Real^{\tilde \dimz}$, $\psi:
\Real^{\tilde \dimz} \to \Real^{\tilde \dimz}$, $ \gamma:
\Real^{\tilde \dimz} \to \Real$ defined as
\[
 \varphi( \xi) = \left(  \ba{c} \xi_2 + \lambda_0 L  \xi_1\\
  \xi_3 + \lambda_1 L^2 \xi_1\\
 \vdots\\
 \xi_{ \tilde \dimz} + \lambda_{ \tilde \dimz -2} L^{ \tilde \dimz -1}
  \xi_1\\
 f_c({\xi}_1,\, {\xi}_2,\, \ldots, \, {\xi}_{\tilde \dimz}) + \lambda_{\dimz -1} L^{ \tilde \dimz}
 \xi_1\\
 \ea
\right )\,, \qquad
 \psi( \xi) = \left(  \ba{c} -\lambda_0 L\\
-\lambda_1 L^2\\ \vdots\\
 -\lambda_{\tilde \dimz -2} L^{ \tilde \dimz -1}\\
 -\lambda_{\tilde \dimz -1} L^{\tilde \dimz}
 \ea \right )
\]
and $\gamma(\xi) =\xi_1$, where $L$ is a positive design parameter
to be taken sufficiently large, $\lambda_i$, $i=0,\ldots, \tilde
\dimz-1$, are such the polynomial $s^{\tilde \dimz} +
\lambda_{\tilde \dimz-1} s^{\tilde \dimz-1} + \ldots + \lambda_1 s
+ \lambda_0$ is Hurwitz, and $f_c(\cdot)$ is any bounded function
 such that  $f_c \circ \tau(z) = f(z)$ for all $z \in \cal S$
 where $\tau: \Real^{\dimz} \to \Real^{\tilde \dimz}$ is defined
 as
  \[\tau(z) = \left (\ba{cccc}Q(z) & L_{F(z)}Q(z) & \cdots & L_{F(z)}^{\tilde \dimz -1} Q(z) \ea  \right
   )\tr\,.
   \]
 \end{proposition}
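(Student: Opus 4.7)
My plan follows the hint of the footnote: the genuinely new content is the ISS bound (\ref{condstabdefRob}); the LER property itself is the main result of \cite{BI03bis}. First, I would apply Lemma \ref{toolPropositionAp} to the LAS set $\mathcal{A}\subset\mathrm{int}\,\mathcal{S}$ to obtain a compact set $\mathcal{R}$ with $\mathcal{A}\subseteq\mathcal{R}\subset\mathcal{S}$ that is LES for $\dot z=F(z)$ and satisfies $\mathcal{D}(\mathcal{R})=\mathcal{D}(\mathcal{A})$. On $\mathcal{S}$, identity (\ref{immobser}) says that the map $\tau(z)=(Q(z),L_{F}Q(z),\ldots,L_{F}^{\tilde\dimz-1}Q(z))^{\top}$ conjugates $\dot z=F(z)$ to the companion-form system $\dot w=A_{c}w+b\,f(w)$, with $A_{c}$ the shift matrix and $b$ the last coordinate vector. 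I would then choose $f_{c}$ as a globally bounded, locally Lipschitz extension of $f$ from a neighborhood of $\tau(\mathcal{S})$ to all of $\Real^{\tilde\dimz}$, so that the announced triple $(\varphi,\psi,\gamma)$ is (up to the signs fixed by the requirement $\gamma(T(z))+Q(z)=0$) precisely the high-gain observer built for this immersed system in \cite{BI03bis}.

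With $T(z)$ defined so that $T_{1}(z)=-Q(z)$ on $\mathcal{R}$ and with error $e=\xi-T(z)$, apply the standard high-gain scaling $\tilde e=D_{L}^{-1}e$ using $D_{L}=\mathrm{diag}(L^{\tilde\dimz-1},\ldots,L,1)$. A routine calculation reduces the error equation to $\dot{\tilde e}=LM_{0}\tilde e+\delta(z,\xi)$, where $M_{0}$ is Hurwitz (determined by the Hurwitz polynomial from the $\lambda_{i}$'s) and $\delta$ is uniformly bounded thanks to the boundedness of $f_{c}$. A quadratic Lyapunov function $V=\tilde e^{\top}P\tilde e$ with $M_{0}^{\top}P+PM_{0}=-I$ then yields $\dot V\leq -LV+c|\tilde e|$, giving for $L$ sufficiently large the locally exponential class-$\mathcal{KL}$ contraction (\ref{condstabdef}) and hence LER. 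To upgrade to rLER, I would rerun this Lyapunov computation with $Q(z)$ replaced by $Q(z)+v(t)$ as in (\ref{xisysdefRob}). The only new contribution to the scaled equation is $D_{L}^{-1}\psi(\xi)v(t)$; the $L^{\tilde\dimz}$ growth in the last row of $\psi$ is exactly compensated by the $L^{-(\tilde\dimz-1)}$ factor acting on the first component in $D_{L}^{-1}$, so this term is bounded, uniformly in $L\geq 1$, by a constant times $|v(t)|$. The Lyapunov inequality then picks up a term proportional to $|\tilde e|\sup_{\tau\leq t}|v(\tau)|$, and a standard ISS argument delivers exactly (\ref{condstabdefRob}) with a linear class-$\mathcal{K}$ residual gain $\ell$.

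The main technical obstacle will be the careful construction of $f_{c}$: it must be globally bounded and locally Lipschitz, must agree with $f$ on a neighborhood of $\tau(\mathcal{S})$, and must be defined consistently even when $\tau$ fails to be injective. I would handle this by a smooth cutoff relative to the compact set $\tau(\mathcal{S})$, exploiting the fact that (\ref{immobser}) guarantees $L_{F}^{\tilde\dimz}Q$ pulls back to a well-defined function on $\tau(\mathcal{S})$. The interaction of this cutoff with the high-gain scaling forces a strict order of quantifiers that is already present in Definition \ref{DefinitionRegularitySoft}: the compact $\mathcal{Z}\subset\mathcal{D}(\mathcal{R})$ must be chosen first, then $f_{c}$ constructed to dominate the (precompact) trajectories emanating from $\mathcal{Z}$, and only then $L$ selected large enough that the contraction in the scaled error beats the resulting bound on $\delta$ and the ISS perturbation from $v$.
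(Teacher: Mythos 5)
The paper offers no self-contained proof of this proposition: it defers to Lemma \ref{toolPropositionAp} for the existence of ${\cal R}$ and to the main result of \cite{BI03bis} for the high-gain construction, with the footnote stating that the only adaptation is the ISS estimate. Your reconstruction --- immersion of the output dynamics via (\ref{immobser}), a bounded locally Lipschitz extension $f_c$, high-gain scaling of the error, a quadratic Lyapunov function, and a re-run of the estimate with the disturbed input $Q(z)+v$ --- is exactly that intended route, and the quantifier order you insist on at the end (fix ${\cal Z}$, then $f_c$, then $L$) is the right one. However, one step as written does not deliver the stated conclusion. You bound the perturbation $\delta$ only by a constant (``uniformly bounded thanks to the boundedness of $f_c$'') and deduce $\dot V\leq -LV+c|\tilde e|$; that inequality gives only an ultimate bound $|\tilde e|=O(c/L)$, i.e.\ \emph{practical} convergence to the graph, whereas (\ref{condstabdef}) is a class-${\cal KL}$ bound that must vanish for initial conditions on ${\rm graph}\,T|_{\cal R}$. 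The argument has to be run in two stages: global boundedness of $f_c$ handles arbitrary $\xi_0\in\Real^{\tilde \dimz}$ and drives $\tilde e$ into an $O(1/L)$ ball in finite time, during which $z(t)$ enters ${\rm int}\,{\cal S}$; from then on the residual perturbation is $f_c(\xi)-f_c(\tau(z))$, which vanishes on the graph and, after the scaling, is Lipschitz in $\tilde e$ with an $L$-independent constant, so that $-L|\tilde e|^2$ dominates and true exponential convergence follows. Skipping this is a real gap --- the bounded-versus-vanishing-on-the-attractor distinction is precisely the theme the paper keeps stressing.

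Two smaller bookkeeping errors, neither fatal. With innovation gains $\lambda_{i-1}L^i$, the scaling that produces $\dot{\tilde e}=LM_0\tilde e+\cdots$ is multiplication by ${\rm diag}(L^{\tilde\dimz-1},\ldots,L,1)$ (compare the paper's own $e=D_L(\tilde\xi-\hat{\tilde\xi})$ in the dirty-derivative constructions), not by its inverse as you wrote. Consequently the term injected by $v$ is $-\lambda_{i-1}L^{\tilde\dimz}v$ in \emph{every} scaled component; your claim that it is bounded uniformly in $L$ by compensation of powers is false. This does not break the proof, because $L$ is fixed before the ISS property is invoked and the definition of rLER places no restriction on the gain $\ell$, so $\ell(s)=O(L^{\tilde\dimz})\,s$ is acceptable --- but the reason you give is not the right one. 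Finally, note the sign convention: with $\gamma(\xi)=\xi_1$ the identity (\ref{Timmdef}) forces $T=-\tau$ on ${\cal R}$, so the observer must be set up for $-\tau(z(t))$ (or the signs in $\gamma$ adjusted); you flag this correctly but it deserves one explicit line.
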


 \begin{remark} \label{remarkNoDiffObs}
 It is well-known (see, for instance, \cite{GKbook}) that a sufficient condition for a pair $(F,Q)$
 to satisfy property (\ref{immobser}) locally with respect to a point $z_0$ is that its observability
 distribution at $z_0$ (see \cite{HerKre})
 \beeq{\label{ObsvDistr}
 \Omega_\dimz (z) = \sum_{k=0}^{\dimz-1} {\rm span}{\partial \over \partial
 z}L_{F(z)}^k Q(z)
 }
 has dimension $\dimz$ at $z=z_0$, namely if the system
 $\dot z = F(z)$ with output $y_z=Q(z)$ satisfies the
 {\em observability rank condition} (by using the terminology of \cite{HerKre}) at $z_0$.
 Such a condition represents an observability
 condition for the system $\dot z = F(z)$ with output $y_z=Q(z)$
 which, however, is far to be necessary to fulfill the property of rLER. In this respect
 it must be stressed again that the
 property of local exponential reproducibility does not involve
 any state observability property of system $\dot z = F(z)$ with output $y_z=Q(z)$
 but rather a property of output reproducibility. $\triangleleft$
\end{remark}

 Clearly the high-gain technique to design output observer behind Proposition
 \ref{PropositionPhase} is not the only tool which can be used to
 design the functions $(\varphi,\psi,\gamma)$ for a triplet which is rLER.
 In order to enrich the available tools, we present now a result
 motivated by the theory of state observers pioneered in \cite{KaKr} (and
 developed in \cite{Andrieu}, \cite{Krener}) which, in turn, has
 inspired the technique to design internal models developed in \cite{SICON}
 in the context of nonlinear output regulation (see also \cite{MPTAC}).
 In this respect it is interesting to observe that if instead of asking $(\varphi, \psi, \gamma)$ to
 be locally Lipschitz these functions were required to be only
 continuous, the
 theoretical tools presented in \cite{SICON} are sufficient to prove that any smooth
 triplet $(F,Q,\ca)$ is rLER if $\ca$ is LAS for $\dot z=F(z)$. In
 particular,  if $(H,G) \in \Real^{\dimxi \times \dimxi}
 \times \Real^{\dimxi \times 1}$, $\dimxi >0$, is an arbitrary controllable pair with $H$ a Hurwitz matrix,
 and $\cal R$ is the set which is LES for $\dot z = F(z)$ (whose existence is guaranteed
 by Lemma \ref{toolPropositionAp} since $\ca$ is LAS for these dynamics), then it turns out that if
 the functions $\varphi$ and $\psi$ in (\ref{xisysdefRob}) are chosen as
\[
 \varphi(\xi) = H \xi \qquad\mbox{and}\qquad \psi(\xi) = G
\]
 then property (\ref{condstabdefRob}) holds true  with (see Propositions 1 in \cite{SICON})
 \beeq{ \label{Tdefinition}
 T(z) =  \int_{-\infty}^0 e^{-Ht} G Q(\Phi_F(t,z)) dt\,.
 }

 Furthermore, if $\dimxi$ is chosen so that $\dimxi \geq 2 \dimz + 2$ and the matrix $H$ is taken so that
 $\sigma(H) \in \{\zeta \in \Compl: \Re(\zeta ) < -\ell\} \setminus \cal S$ where ${\cal S} \in \Compl$ is a set of
 zero Lebesgue measure and $\ell$ is a sufficiently large positive number, then there always
 exists a class-$\cal K$ function $\rho(\cdot)$ such that the
 following partial injectivity condition
 \beeq{\label{partialinj}
  |T(z_1)-T(z_2)| \leq \rho(|Q(z_1)-Q(z_2)|) \qquad \forall \,
  z_1,z_2 \in {\cal R}
 }
 holds (see Proposition 2 in \cite{SICON}) and, in turn, the
 latter guarantees the existence of a {\em continuous} function $\gamma:
 \Real^\dimxi \to \Real$ such that also property (\ref{Timmdef}) holds true (see Proposition 3 in \cite{SICON}).
 As shown in \cite{MPTAC} (see Proposition 4 there), a possible expression of such a $\gamma(\cdot)$
 is given by
 \beeq{\label{gammaMCS}
 \gamma(\xi) = \inf_{z \in {\cal R}} \{ - Q(z)+ \rho (|\xi -
 T(z)|)\}\,.
 }

  The previous arguments, however, are not conclusive if system (\ref{xisys}) is required to be
  locally Lipschitz. In this respect an extra condition (see the forthcoming (\ref{Omegadef}),
  (\ref{rankcond})) is needed to guarantee the existence of a
 locally Lipschitz $\gamma$ as precisely proved in \cite{AddendumSICON}. The main results in this direction
 are presented
 in the next lemma (whose proof is a minor adaptation of the main result of \cite{AddendumSICON}).\\[1mm]
 \begin{proposition} \label{lemmarank}
 Let $\cal O$ be an open bounded set which is
 backward invariant for $\dot z = F(z)$ with $F:\Real^\dimz \to \Real^\dimz$ a smooth
 function. Let $\ca \subset \cal O$ be a compact set which is LAS for $\dot z = F(z)$ and let
 $Q: \Real^\dimz \to \Real$ be a $C^\infty$
 function.
Let $\Omega (z)$ be the distribution defined as
 \beeq{\label{Omegadef}
  \Omega(z) :=  \sum_{k=0}^\infty {\rm span}
  {\partial \over \partial z} L^k_{F(z)} Q(
  z)\,.
 }
 If there exists a constant $c \leq \dimz$ such that
 \beeq{\label{rankcond}
 {\rm dim} \Omega (z) = c \qquad \forall \, z \in {\cal O}
 }
 then for any compact set ${\cal R} \subset \cal O$, with ${\cal R} \supseteq
 \ca$, the function $\rho$ in (\ref{partialinj}) can be taken linear and
 the function $\gamma$ in (\ref{gammaMCS}) is Locally Lipschitz.
 As a consequence the triplet $(F, Q, {\cal A})$ is rLER.
 \end{proposition}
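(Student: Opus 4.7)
The plan is to follow the argument of \cite{AddendumSICON} essentially verbatim, treating it as a template and tracking where the compact set of interest enlarges from $\ca$ to a general $\cal R$ with $\ca \subseteq \cal R \subset \cal O$. The starting data are already in place: Lemma \ref{toolPropositionAp} upgrades the LAS property of $\ca$ to the existence of an LES set $\cal R'$ inside $\cal O$, the integral (\ref{Tdefinition}) defines a well-posed, locally Lipschitz $T:\cal R \to \Real^\dimxi$ (because $\cal O$ is backward invariant and $e^{-Ht}G$ decays exponentially on $(-\infty,0]$), and Propositions~1--3 of \cite{SICON} already yield partial injectivity (\ref{partialinj}) with some class-$\cal K$ $\rho$ and a continuous $\gamma$ given by (\ref{gammaMCS}). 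What remains is (i) to upgrade $\rho$ to a linear function, (ii) to deduce that $\gamma$ inherits local Lipschitz regularity from this upgrade, and (iii) to verify the ISS estimate (\ref{condstabdefRob}).

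The core of the argument is (i), which is where the constant-rank hypothesis (\ref{rankcond}) enters. Since $\dim \Omega(z)=c$ throughout the open set $\cal O$, a standard selection procedure produces, in a neighborhood of each point of $\cal R$, indices $k_1<\dots<k_c$ such that the differentials $dL_F^{k_i}Q$ are pointwise linearly independent and span $\Omega(z)$; every other $dL_F^{j}Q$ is then a smooth linear combination of these on that chart. Using compactness of $\cal R$, I would take a finite open cover, form the union of all indices involved, and work with a single smooth map $\bar\tau:\cal O\to\Real^N$ whose components are the $L_F^{j}Q$ used across the cover. The crucial consequence is that $L_F^{j+1}Q$ is a smooth function of $\bar\tau(z)$ for every $j$ appearing in $\bar\tau$, so the ODE $\dot z=F(z)$ projects to a Lipschitz autonomous ODE $\dot{\bar\tau}=\Psi(\bar\tau)$ on $\bar\tau(\cal O)$; therefore $Q(\Phi_F(t,z))$ depends on $z$ only through $\bar\tau(z)$, and $T(z)=\hat T(\bar\tau(z))$ with $\hat T$ Lipschitz on the compact image $\bar\tau(\cal R)$ by dominated convergence applied to (\ref{Tdefinition}).

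Given this factoring, the linear partial-injectivity bound follows by the coercivity argument of \cite{AddendumSICON}: on the compact invariant image $\bar\tau(\cal R)$, differences in the higher Lie derivatives of $Q$ at two points $z_1,z_2$ are controlled, via the Lipschitz flow of $\Psi$ and Gronwall-type comparison along backward orbits, by $|Q(z_1)-Q(z_2)|$, yielding $|T(z_1)-T(z_2)|\le K\,|Q(z_1)-Q(z_2)|$ for $z_1,z_2\in\cal R$. Once $\rho$ can be taken linear in (\ref{partialinj}), the expression (\ref{gammaMCS}) becomes an infimum over $z\in\cal R$ of maps of the form $\xi\mapsto -Q(z)+K|\xi-T(z)|$, each of which has the same Lipschitz constant $K$ in $\xi$, so the infimum is Lipschitz on $\Real^\dimxi$; local Lipschitzness (indeed globally Lipschitz) follows. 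The rLER conclusion then reduces to checking (\ref{condstabdefRob}) for $(\varphi,\psi,\gamma)=(H\xi,G,\gamma)$, which is standard: the ``$T$-error'' $\xi-T(z)$ satisfies a linear Hurwitz system perturbed by $\psi(\xi)v(t)$, giving both the locally exponential $\cal KL$ estimate and the class-$\cal K$ gain in $\sup_\tau|v(\tau)|$.

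The principal obstacle is the coercivity step at the top of the third paragraph: the class-$\cal K$ bound comes for free from continuity of $\hat T$ on the compact $\bar\tau(\cal R)$ and the partial-injectivity already established, but upgrading it to a \emph{linear} bound uses that higher-order Lie derivatives can be recovered from $Q$ by iterated backward flow on the compact invariant set $\cal R$, a step that hinges specifically on backward invariance of $\cal O$ and on the constant-rank hypothesis rather than on mere regularity of $Q$. This is precisely the content imported from \cite{AddendumSICON}, and the adaptation to the present setting amounts only to replacing the attractor used there with the LES set $\cal R$ supplied by Lemma \ref{toolPropositionAp}.
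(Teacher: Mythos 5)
The paper offers no proof of this proposition at all --- it defers entirely to \cite{AddendumSICON} with the remark that the proof is ``a minor adaptation of the main result'' there --- so your reconstruction has to be judged on its own merits, and its central step does not hold up. You argue for the inequality exactly as printed in (\ref{partialinj}), namely $|T(z_1)-T(z_2)|\leq K\,|Q(z_1)-Q(z_2)|$, via the claim that ``differences in the higher Lie derivatives of $Q$ at two points are controlled \dots by $|Q(z_1)-Q(z_2)|$.'' That inequality is false in general: $T(z)$ in (\ref{Tdefinition}) depends on the entire backward orbit of $z$, so two distinct points on a level set of $Q$ (right-hand side zero) will generically have different backward orbits and hence $T(z_1)\neq T(z_2)$; a single output value cannot control the higher Lie derivatives. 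The printed (\ref{partialinj}) is evidently a typo --- the partial-injectivity property actually established in Proposition 2 of \cite{SICON}, and the one the construction needs, is the \emph{reverse} bound $|Q(z_1)-Q(z_2)|\leq\rho(|T(z_1)-T(z_2)|)$. It is this orientation that makes the infimum (\ref{gammaMCS}) satisfy $\gamma(T(z))=-Q(z)$ on $\cal R$, i.e.\ property (\ref{Timmdef}): one needs $-Q(z)+\rho(|T(z_0)-T(z)|)\geq -Q(z_0)$ for all $z,z_0\in{\cal R}$, which is precisely $Q(z)-Q(z_0)\leq\rho(|T(z)-T(z_0)|)$. Your Gronwall/backward-flow sketch, aimed at the wrong inequality, is therefore a step that would fail rather than a faithful adaptation of \cite{AddendumSICON}.

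A related point: with $\rho$ linear, the map $\xi\mapsto\inf_{z\in{\cal R}}\{-Q(z)+K|\xi-T(z)|\}$ is automatically globally Lipschitz as an infimum of uniformly $K$-Lipschitz functions, so that part of your argument proves nothing beyond what is free. The genuine content of the proposition --- where the constant-rank hypothesis (\ref{rankcond}) and backward invariance of $\cal O$ enter --- is that the class-$\cal K$ modulus in the \emph{correctly oriented} partial injectivity can be taken linear, so that the Lipschitz infimum still interpolates $-Q$ on ${\rm graph}\,T|_{\cal R}$. Your ancillary observations (the factoring of the Lie derivatives through a finite map $\bar\tau$ of constant rank, the well-posedness of $T$ from backward invariance, the ISS estimate for $\xi-T(z)$ under $\dot\xi=H\xi+G(Q(z)+v)$, and the role of Lemma \ref{toolPropositionAp}) are sound, but the coercivity step needs to be redone in the opposite direction before the proof stands.
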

 \vspace{2mm}

 \begin{remark}
 By also bearing in mind Remark \ref{remarkNoDiffObs}, it is worth
 noting how condition (\ref{rankcond}) {\em does not} require the
 observability distribution (\ref{ObsvDistr})to be full rank, namely
 the system $\dot z = F(z)$ with output $y_z = Q(z)$
 to be observable in any sense. Rather, condition (\ref{rankcond}) can be regarded as a
 regularity condition of the observable part of system $\dot z = F(z)$ with output $y_z =
 Q(z)$. $\triangleleft$
 \end{remark}

\begin{remark}
 By going throughout the technical details in \cite{AddendumSICON}, it is possible to observe that the
 requirement about the existence of an open bounded set $\cal O$ which is {\em backward invariant} for $\dot z = F(z)$
 is {\em uniquely} motivated by the need of having the function $T(z)$ in (\ref{Tdefinition}) well-defined
 and $C^2$. In
 this respect the requirement
 about the existence of a bounded invariant set $\cal O$ can be substituted  by the requirement that
 the function $T(z)$ in (\ref{Tdefinition}) is well-defined and $C^2$ for all $z \in {\rm int} \cal R$
 for a proper choice of the matrix $H$. In this case, by the details in \cite{AddendumSICON},
 it turns out that the rank condition (\ref{rankcond}) must be substituted by
 \[
 {\rm dim} \Omega (z) = c \qquad \forall \; z \in
   \{\Phi_F(t,\varsigma)\quad : \quad t\leq 0\,, \; \varsigma \in {\cal
   R}\}\,.\,\triangleleft
 \]
 \end{remark}

\begin{remark}\label{remarkclipping}
 The requirement in Lemma \ref{lemmarank} about the existence of a
 bounded set $\cal O$ which is backward invariant for $\dot z = F(z)$, may
 be practically overtaken by properly "clipping" the function $F(z)$ outside the set $\ca$.
 As a matter of fact, being the
 property of rLER related to the ability of reproducing the
 signals $Q(z(t))$ generated by the system $\dot z(t) = F(z(t))$
 by taking its initial conditions in a neighborhood of $\ca$, it turns out that
 any triplet $(F_c,Q_c,\ca)$, with $F_c : \Real^\dimz \to \Real^\dimz$, $Q_c:\Real^\dimz \to \Real$
 functions which agree with $F$ and $Q$ on some compact $\cal S$ containing $\ca$ in its interior,
 can be used in place of $(F,Q,\ca)$ to check whether the latter is rLER and eventually
 to design the functions $(\varphi,\psi,\gamma)$. In this respect,
 the presence of a bounded backward invariant set $\cal O$ may be forced
 by properly clipping to zero the vector field $F(z)$ outside
 $\ca$. Alternatively, by bearing the previous remark, the function $T$ be forced
 to be well-defined and $C^2$ by properly clipping the function $Q(z)$ outside
 $\ca$. For reason of space we omit the technical details to rigorously prove the previous
 intuition and we refer the reader to the example in Section \ref{SecExample} for an illustrative
 example.
 \end{remark}

\section{Example}\label{SecExample}

Consider the system
 \beeq{\label{sysExa}
 \ba{rcl}
 \dot w &=& 0\\
 \dot x &=& -x^3 + w + y\\
 \dot y &=& x + u
 \ea
 }
 with control input $u \in \Real$, measured output $y\in \Real$ in
 which $w$ is a constant signal taking value in the interval $W:=[\underline{w},
 \overline{w}]$. By defining ${\cal C}_1 :=W \times \Real$ and
\[
  \ca:= \{(w,x) \in {\cal C}_1\; : \; x = \sqrt[3]{w} \}
\]
 we address  the problem of stabilizing the
 set $\ca \times \{0\}$, which is invariant for the previous
 system with $u=0$, by means of a $y$-feedback. The set $\ca$ is LAS for the
 zero dynamics of (\ref{sysExa}) with domain of attraction ${\cal D}(\ca) =  {\cal C}_1$
 and, by defining  $u= - \kappa y + v$, the previous problem fits in the
 framework of Section \ref{SecFramework}. Note, in particular,
 that by only increasing the value of $k$ while setting $v=0$ the
 desired asymptotic stabilization objective can not be met. As a
 matter of fact, different equilibria characterize the system according to the value of $w$.
 For $w=0$, the $y$-component of the system has three equilibria
 given by $(0,\sqrt{1/\kappa^3}, -\sqrt{1/\kappa^3})$ which, for $w \neq
 0$, collapse in one equilibrium, solution of $w+y = \kappa^3y^3$,
 which tends to $0$ as $k$ tends to $\infty$. So, with $v=0$, only
 {\em practical} stabilization of the set $\ca \times \{0\}$ in the parameter $k$ can be
 achieved. In order to apply Theorem \ref{TheoremFirst} and to obtain
 asymptotic stabilization of the set $\ca \times \{0\}$ by means
 of dynamic feedback, we let $z:= \mbox{col}(w,x)$,
  \[
  F(z) := \left ( \ba{c} 0\\ -x^3 + w \ea \right ) \qquad Q(z) := x
 \]
 and we check the exponential reproducibility of the
 triplet $(F(z),Q(z),\ca)$. To this purpose, let
 $\cal S$ be a compact set of the form
 \[
 {\cal S} = \{(w, x) \in {\cal C}_1 \; : \; \sqrt[3]{\underline{w}}- {1 \over 2} \leq x \leq \sqrt[3]{\overline{w}} +
 {1 \over 2} \}
 \]
 so that $\ca \subset {\rm int} \cal S$, and note that, by Lemma
 \ref{toolPropositionAp}, there exists a set ${\cal R}$, $\ca \subseteq {\cal R} \subset \mbox{int} \cal
 S$, which is LER with ${\cal D}({\cal R}) = {\cal D}(\ca)$.
 Both Proposition \ref{PropositionPhase} and \ref{lemmarank} can
 be used to prove that the triplet in question is rLER and, indeed,
 to design the functions $(\varphi,\psi,\gamma)$.

  By following Proposition \ref{PropositionPhase}, it is easy
  to check that
  \[
 L_{F(z)}^2Q (z) = -3 \, Q^2(z) \, L_{F(z)}Q (z) \qquad \forall \,
 z \in {\cal C}_1\,,
  \]
  namely condition (\ref{immobser}) holds (with $\tilde m =2$), and thus the triplet $(F(z),Q(z),\ca)$ is rLER. According
  to the proposition, the
  functions $(\varphi,\psi,\gamma)$ can be designed as
  \beeq{\label{Ex1Triplet1}
   \varphi(\xi) = \left ( \ba{c} \xi_2 + \lambda_0 L \xi_1\\
   f_c(\xi_1,\xi_2) + \lambda_1 L^2 \xi_1
  \ea
     \right )
\qquad
 \psi(\xi) = \left(  \ba{c} -\lambda_0 L\\ - \lambda_1 L^2 \ea
 \right )
 \qquad \gamma(\xi)= \xi_1
  }
 where $\lambda_0$ and $\lambda_1$ are such that $s^2 + \lambda_1 s +
 \lambda_0$ is an Hurwitz polynomial, $L$ is a sufficiently large design parameter and $f_c(\cdot)$ is any smooth
 bounded function such that
 \[
 f_c(x, \,-x^3+w) = -3 x^2 (-x^3+w)   \qquad \forall \, (x,w) \in {\cal
 S}\,.
  \]

 The functions $(\varphi,\psi,\gamma)$ can be designed according
 to Proposition \ref{lemmarank} as well. However note that this proposition
 cannot be applied as such  due to the  absence of a bounded invariant set
 containing ${\ca}$ (indeed finite escape time occur in backward time for initial conditions
 outside $\ca$). To overtake this obstacle and by bearing in mind
 Remark \ref{remarkclipping}, pick a smooth function $a:\Real \to \Real_{\geq 0}$ such that
$a(s) =1$ for all $\sqrt[3]{\underline{w}}- {1 \over 2} \leq s
\leq \sqrt[3]{\overline{w}} + {1 \over 2}$ and $a(s) = 0$ for all
$s \leq \sqrt[3]{\underline{w}}- 1$ and $s \geq
\sqrt[3]{\underline{w}}+ 1$,  and consider the system $\dot {z} =
a(x) F(z)$. For this system the bounded set $\cal O$ defined as
 \[
 {\cal O} = \{ z \in {\cal C}_1 \; : \; \sqrt[3]{\underline{w}}- 1 \leq x \leq
 \sqrt[3]{\overline{w}} + 1 \}\,,
 \]
 which is open with respect to the subset topology induced by ${\cal
 C}_1$, is invariant. Thus, Lemma
 \ref{lemmarank} can be applied to the triplet $(a(x)F(z), Q(z), {\ca})$.
 In this specific case
 \[
 \Omega(z) = {\rm span} \left( \ba{cc} 0 & 1\\
 a(x) & \ast \ea \right )
 \]
 where $\ast$ is a junk term, from which it follows that
 \[
 {\rm dim}\Omega(z) \equiv 2 \qquad \forall \; z\in {\cal O}
 \]
 which implies, by Proposition \ref{lemmarank}, that the triplet $(a(x) F(z), Q(z), {\ca})$
 is rLER. But, as the functions $F(z)$ and $a(x)F(z)$ agree on $\cal
 S$, the fact that the triplet $(a(x) F(z), Q(z),
 {\ca})$ is rLER can be shown to imply that also the triplet $(a(x) F(z), Q(z),
 {\ca})$ is such. Thus, according to Proposition
 \ref{lemmarank}, the functions $(\varphi,\psi,\gamma)$ can be
 also designed as
 \beeq{\label{Ex1Triplet2}
 \varphi(\xi) = H \xi\,, \qquad \psi(\xi)=G\,, \qquad \gamma(\xi)
 = \inf_{z \in {\cal R}} \{ - Q(z)+ \rho \cdot |\xi -
 T(z)| \}
 }
where $(H,G) \in \Real^{5\times 5} \times \Real^{5 \times 1}$ is
an arbitrary controllable pair with the matrix $H$ such that
$\sigma(H) \in \{\zeta \in \Compl: \Re(\zeta ) < - \ell\}
\setminus \cal S$ where ${\cal S} \in \Compl$ is a set of
 zero Lebesgue measure and $\ell$ is a sufficiently large positive
 number, the function $T$ defined as in (\ref{Tdefinition}) and
 $\rho$ a sufficiently large positive number.\\

  As a result the problem of output feedback stabilization of the
  set $\ca \times \{0\}$ can be achieved, by bearing in mind Theorem \ref{TheoremFirst}  and the subsequent
  remark, by the following dynamic controller
 \[
 \ba{rcl}
  \dot \xi &=& \varphi(\xi) - \psi(\xi)[\gamma(\xi) - \kappa y]\\
  u &=& - \kappa y + \gamma(\xi)
 \ea
 \]
 with the functions $(\varphi,\psi,\gamma)$ designed as in
 (\ref{Ex1Triplet1}) or (\ref{Ex1Triplet2}) and $\kappa$ a
 sufficiently large number.

\begin{appendix}


 \section{Proof of Theorem \ref{TheoremFirst}} \label{ProofTheoremFirst}

 By the definition of LER of the triplet $({f}(x,0),{q}(x,0),
 {\ca})$  there  exists a set ${\cal R}\supseteq
 \ca$ which is LES for $\dot x = f(x,0)$ and , for any compact set ${\cal X}_1 \subset {\cal D}({\cal
 R})$,
 there exist an integer $\dimeta$, locally  Lipschitz functions $\varphi:\Real^\dimeta \to
 \Real^\dimeta$, $\psi: \Real^\dimeta \to \Real^\dimeta$, $\gamma: \Real^\dimeta \to
 \Real$ and a smooth function $T: \Real^\dimx \to \Real^\dimeta$
 such that
  \beeq{\label{TimmdefProof}
    \ba{rcl}
       q(x,0) + \gamma(T(x)) =0
    \ea \qquad \forall \; x \in {\cal R}
   }
   and for all $\xi_0 \in \Real^\dimxi$ and
   $x_0 \in {\cal X}_1$  the solution $(\xi(t), x(t))$ of

       \beeq{\label{xisysdefProof}
        \ba{rcll}
         \dot x &=& f(x,0) & x(0) = x_0\\
            \dot \xi &=& \varphi(\xi) +
            \psi(\xi)\, q(x,0) & \xi(0) = \xi_0
   \ea
     }
     satisfies
     \beeq{\label{condstabdefProof}
        |(\xi(t),x(t))|_{{\rm graph }\left. T \right |_{\cal R}}
        \leq \beta(t, |(\xi_0, x_0)|_{{\rm graph }\left . T \right |_{\cal R}
        })
     }
    where $\beta(\cdot,\cdot)$ is a locally exponentially class-$\cal
    KL$ function. As $\ca$ is LAS($\cal X$) and ${\cal R}\supseteq \ca$,
    we have ${\cal D}(\cal R) = {\cal D}(\ca)$ and thus the
    previous properties hold, in particular, with ${\cal X}_1 = \cal
    X$. Furthermore, in case $\ca$ is LES for $\dot x = f(x,0)$, it
    is possible to show\footnote{Internal remark: to be checked.} that (\ref{TimmdefProof})
    and (\ref{condstabdefProof}) hold also with $\cal R$ replaced by
    $\ca$ possibly with a different class-$\cal KL$ function $\beta(\cdot,
    \cdot)$.

 Assume, without loss of generality (as $(A,B,C)$ has relative degree $r$ and $(A,C)$ is observable),
 that the pair $(A,C)$ is in the canonical observability
 form and that $B=(0,\ldots, 0,1)\tr$, and
 choose, as candidate controller, the system
 \beeq{\label{regth1}
  \ba{rcl}
 \dot \eta &=& \varphi(\eta)  - \psi(\eta)\gamma(\eta) - \psi(\eta) \, \kappa B\tr A y\\
  v &=&\gamma(\eta)
  \ea
}
 which, by the structure of $A$ and $B$, is of the form
(\ref{etasys}) since $B^T A y = a_r y_m$ for some real number
$a_r$.

Consider now the change of variables
 \beeq{\label{chi-etachange}
  \eta \; \to \; \chi :=\phi_{\psi}(B\tr y, \eta)\,.
 }
  Note that such a change of variables is
well-defined for all $y$ and $\eta$ as $\psi$ is complete.

Since
\[
 {\partial \phi_{\psi}(t',\eta) \over \partial t'} -
 {\partial \phi_{\psi}(t',\eta) \over \partial \eta} \psi(\eta)
 \equiv 0
\]
and using the fact that
 \[
 \left . {\partial \phi_{\psi}(t',\eta) \over \partial t'}
 \right |_{t'=0} = \psi(\eta)
 \qquad \quad
 \left . {\partial \phi_{\psi}(t',\eta) \over \partial \eta}
 \right |_{t'=0} = I\,,
 \]
  it turns out that the closed-loop dynamics (\ref{plantstab}), (\ref{regth1}) in the new coordinates  can be
described as the feedback interconnection of a system of the form
 \beeq{\label{upperth1}
 \ba{rcl}
 \dot x &=& {f}(x,0) + \tilde f(x,y)\\
 \dot \chi &=& \varphi(\chi) + \psi(\chi){q}(x,0) + \tilde \ell_1(x,\chi,y)\\
 \ea
 }
 and a system of the form
 \beeq{\label{lowerth1}
 \dot y = \kappa A y + B({q}(x,0) + \gamma(\chi)) +
 \tilde \ell_2(x,\chi,y) }

  in which $\tilde f(x,y)$, $\tilde \ell_1(x,\chi,y)$ and
$\tilde \ell_2(x,\chi,y)$ are {\em locally Lipschitz} functions
satisfying $\tilde f(x,0)=0$, $\tilde \ell_1(x,\chi,0)=0$ and
$\tilde \ell_2(x,\chi,0)=0$ for all $x \in \cal C$ and $\chi \in
\Real^\dimeta$ and with $\tilde \ell_1$ and $\tilde \ell_2$
possibly dependent on ${\cal X}$.

 Let $\cal N$ be an arbitrary compact set of
$\Real^\dimeta$ and denote by $\Xi$ the image of $\cal N$ under
the function (\ref{chi-etachange}) (note that $\cal N$ may depend
on ${\cal X}$). Since system (\ref{upperth1}) with $y=0$ is
nothing but (\ref{xisysdefProof}), it turns out that ${\rm graph}
\left . T \right |_{\cal R}$ is LES(${\cal X} \times \Xi$) for
system (\ref{upperth1}) with $y=0$.
 Furthermore, by (\ref{TimmdefProof}), the term ${q}(x,0) + \gamma(\chi)$
 in (\ref{lowerth1}) is identically zero for $(x,\chi) \in {\rm graph} \left . T \right |_{\cal R}$.
 From these facts and the results in \cite{TP}, \cite{Atassi}, it follows that for any
 compact set ${\cal Y} \in \Real^\dimy$ there exists a
 $\kappa^\star>0$ such that for all $\kappa \geq \kappa^\star$ the set
 ${\rm graph}\left . T\right |_{\cal R} \times \{0\}$ is LES(${\cal X} \times
\Xi \times \cal Y$) for (\ref{upperth1}), (\ref{lowerth1}). By
taking $\tau = \left . T \right |_{\cal R}$ the previous result
proves the first part of the theorem,  namely that ${\rm graph}\,
\tau \times \{ 0\}$ is LES(${\cal X} \times {\cal Y} \times \cal
N$) for the closed-loop system (\ref{plantstab}), (\ref{regth1}).

We prove now the second claim of the theorem, namely that ${\rm
graph}\left . \tau \right|_\ca \times \{0\}$ is LAS(${\cal X}
\times {\cal Y} \times \cal N$).
  Let $\kappa \geq \kappa^\star$ be fixed and note that, as
 ${\rm graph}\tau  \times \{0\}$
 attracts uniformly the closed-loop trajectories leaving ${\cal X} \times {\cal
  Y} \times \cal N$, Proposition \ref{toolPropomega} yields that
 \[
 \omega( {\cal X} \times {\cal Y} \times {\cal N}) =
 \omega({\rm graph}\tau \times \{0\}) \subseteq {\rm graph}\tau \times \{0\}
 \]
 in which $\omega(S)$ denotes the omega limit set of the set $S$ associated
 to the closed-loop system.
  We prove now that if $(x,y,\chi) \in \omega( {\rm graph}\tau \times \{0\})$
 then necessarily $x \in \omega({\cal R})$ in which $\omega({\cal R})$
 denotes the omega limit set of the set ${\cal R}$ associated to the system
 $\dot x = f(x,0)$.

Indeed, consider
  a sequence $\{x_n, y_n, \chi_n\}$ with  $(x_n,\chi_n)\in
{\rm graph}\tau$ and so in particular $x_n \in {\cal R}$, and $y_n
\equiv 0$,
  and a divergent sequence $\{t_n\}$, such that, the following holds
  \beeq{\label{ccth1}
   |x(t_n,x_n) - \bar x| \to 0 \,  ,
  }
where $x(t, x_n)$ and $\chi((x_n,\chi_n),t)$ denotes the solution
of
   \beeq{\label{triangular}
  \ba{rcl}
  \dot x &=& {f}(x, 0)\\
  \dot \chi &=& \varphi(\chi) + \psi(\chi)q(x,0)
  \ea
  }
   with initial conditions $(x_n,\chi_n)$.
$x_n$ being in ${\cal R}$, this implies $\bar x \in \omega ({\cal
R})$.
  Now, considering the system given by the first dynamics in (\ref{triangular})
  and using the  fact that $\ca \subseteq \cal R$ uniform attracts the trajectories
  of this system leaving $\cal X$, Proposition \ref{toolPropomega}
  in Appendix
  yields that  $\omega({\cal X}) = \omega({\cal R}) = \omega(\ca)
  \subseteq \ca$. By this and the previous arguments we conclude that
  the $x$ components of the closed-loop trajectories are uniformly
  attracted by $\omega(\ca) \subseteq \ca$. From this the result follows by
  standard arguments. $\triangleleft$


\section{Auxiliary results} \label{AppendixAuxRes}

\begin{proposition} \label{toolPropomega}
Let
 \beeq{\label{Fsysapp}
 \dot z = F(z)
 }
 be a given smooth system and let $S$ be a compact set which is forward invariant for (\ref{Fsysapp})
 and which uniformly (in the initial condition) attracts the trajectories of (\ref{Fsysapp}) originating in a compact set $D \supset S$.
 Then $\omega(D) = \omega(S) \subseteq S$.
\end{proposition}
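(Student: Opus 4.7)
The plan is to establish $\omega(S)\subseteq S$, $\omega(S)\subseteq\omega(D)$, and $\omega(D)\subseteq S$ by short observations, and then to concentrate on the remaining inclusion $\omega(D)\subseteq\omega(S)$. Since $S$ is compact and forward invariant, any limit of points of the form $\phi_F(t_k,x_k)$ with $x_k\in S$ lies in $S$ by closedness, which gives $\omega(S)\subseteq S$; the trivial inclusion $S\subseteq D$ yields $\omega(S)\subseteq\omega(D)$. For $\omega(D)\subseteq S$, if $p=\lim\phi_F(t_k,x_k)$ with $x_k\in D$ and $t_k\to\infty$, uniform attraction gives $|\phi_F(t_k,x_k)|_S\to 0$, whence $|p|_S=0$ and so $p\in S$. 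A by-product of the hypotheses that I will use below is that uniform attraction to the bounded set $S$, combined with continuous dependence on initial data over bounded time intervals, produces a compact set $K\supseteq D\cup S$ in which every forward trajectory originating in $D$ remains.

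The core of the argument is the reverse inclusion $\omega(D)\subseteq\omega(S)$, which I would establish by a backward-shift-and-compactness construction. Fix $p\in\omega(D)$ with $\phi_F(t_k,x_k)\to p$, $x_k\in D$, $t_k\to\infty$. For each fixed $T>0$ and $k$ large enough that $t_k\geq T$, set $y_k^T:=\phi_F(t_k-T,x_k)\in K$. Compactness of $K$ lets me extract a convergent subsequence $y_{k_j}^T\to y^T$, and since $t_{k_j}-T\to\infty$ with $x_{k_j}\in D$, the limit $y^T$ belongs to $\omega(D)$, and therefore to $S$ by what has just been shown. The semigroup property together with continuous dependence on initial data over the fixed window $[0,T]$ then gives
\[
  \phi_F(T,y^T)=\lim_{j}\phi_F(T,y_{k_j}^T)=\lim_{j}\phi_F(t_{k_j},x_{k_j})=p,
\]
so for every $T>0$ there exists $y^T\in S$ with $\phi_F(T,y^T)=p$. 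Choosing any sequence $T_n\to\infty$ produces $\phi_F(T_n,y^{T_n})=p$ with $y^{T_n}\in S$, which is exactly the statement $p\in\omega(S)$.

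The main obstacle I foresee is producing the backward-shifted limit $y^T$ and identifying it as a point of $\omega(D)\subseteq S$. This requires simultaneously the a priori compactness of the set of forward trajectories issuing from $D$ (so that subsequences can be extracted) and the observation that shifting time back by a fixed amount preserves the $\omega$-limit property. Both facts follow from uniform attraction and continuous dependence on initial data, but they need to be assembled with some care; once they are in place, the rest of the proof is a routine compactness argument.
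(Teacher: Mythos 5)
Your proof is correct, and the easy inclusions ($\omega(S)\subseteq\omega(D)$ from $S\subseteq D$, $\omega(S)\subseteq S$ from forward invariance, and $\omega(D)\subseteq S$ from uniform attraction) coincide with what the paper does, except that the paper obtains $\omega(D)\subseteq S$ by contradiction while you argue it directly. The genuine difference lies in the remaining inclusion $\omega(D)\subseteq\omega(S)$: the paper's proof never actually establishes it. It announces ``to prove that $\omega(D)=\omega(S)$ suppose that it is not'' and then negates the wrong statement, taking the negation to be the existence of $\bar z\in\omega(D)$ with $|\bar z|_S\geq\epsilon$; the ensuing contradiction only yields $\omega(D)\subseteq S$, which does not by itself give $\omega(D)\subseteq\omega(S)$ (one still needs the backward invariance of $\omega(D)$ or an argument such as yours). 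Your backward-shift-and-compactness construction --- extracting, for each window length $T$, a limit $y^T\in\omega(D)\subseteq S$ of the shifted points $\phi_F(t_{k_j}-T,x_{k_j})$ and observing that $\phi_F(T,y^T)=p$ --- supplies exactly this missing step, and it is sound: the a priori compact set $K$ containing the forward flow of $D$ does follow from uniform attraction to the compact set $S$ together with continuity of the flow over the initial bounded time interval. So your argument is not only a valid alternative but is, strictly speaking, more complete than the one printed in the appendix; the price is the extra subsequence and compactness bookkeeping, which the paper's shorter (but logically incomplete) contradiction argument sidesteps.
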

{\em Proof}
  First of all note that $\omega(D)$ and $\omega(S)$ exist and that, by
  definition, $\omega(S) \subseteq \omega(D)$. Furthermore $\omega(S) \subseteq S$
  as $S$ is forward invariant for (\ref{Fsysapp}).
  To prove that $\omega(D) = \omega(S)$ suppose that it is not, namely that
  there exist a $\bar z \in \omega(D)$ and an $\epsilon>0$
  such that $|\bar z|_S \geq \epsilon$. As $S$ uniformly attracts the trajectories of (\ref{Fsysapp})
  originating from $D$, there exists a $t_{\epsilon/2}>0$ such that $|z(t, z_0)|_S
  \leq \epsilon/2$ for all $z_0 \in D$ and for all $t \geq t_{\epsilon/2}$.
 Moreover, by definition of $\omega(D)$,
  there exist sequences $\{z_n\}_0^\infty$ and $\{t_n\}_0^\infty$, with
  $z_n \in  D$ and $\lim_{n \rightarrow \infty}t_n = \infty$, such that
  $\lim_{n \rightarrow \infty}z(t_n, z_n) =\bar z$. This, in particular,
  implies that for any $\nu>0$ there exists a $n_\nu>0$ such that $|z(t_n,z_n)
  -\bar z| \leq \nu$ for all $n \geq n_\nu$. But, by taking $\nu = \min\{
  \epsilon/2,\nu_1\}$ with $\nu_1$ such that $t_n \geq t_{\epsilon/2}$ for all
  $n \geq n_{\nu_1}$,  this contradicts that $S$ uniformly attracts the trajectories
  of the system originating from $D$. $\triangleleft$


\begin{proposition} \label{Propx1x2}
 Consider a system of the form
 \beeq{\label{x1x2sys}
  \ba{rcl}
  \dot x_1 &=& f_1(x_1,x_2) \qquad x_1 \in \Real^{\dimx_1}\\
  \dot x_2 &=& f_2(x_1,x_2) \qquad x_2 \in \Real^{\dimx_2}
  \ea
 }
 and assume that there exist a compact set $\ca \subset
 \Real^{\dimx_1}$ and a smooth function $\tau : \Real^{\dimx_1} \to
 \Real^{\dimx_2}$  such that the set
 \[
  {\rm graph} \left . \tau \right |_\ca = \{(x_1,x_2)\in \ca \times \Real^{\dimx_2} \quad : \quad x_2 =
  \tau(x_1)\}
  \]
  is LES for (\ref{x1x2sys}) and the set $\ca$ is LES for the system $\dot x_1 = f_1(x_1,\tau(x_1))$.
  Let $q : \Real^{\dimx_1} \times \Real^{\dimx_2} \to \Real$ be a smooth
  function. If the triplet $(f_1(x_1,\tau(x_1)),$ $ q(x_1, \tau(x_1)),
  \ca)$ is rLER then the triplet $({\rm col}(f_1(x_1,x_2),\, f_2(x_1,x_2)), q(x_1,x_2),
  {\rm graph} \left . \tau \right |_\ca)$ is LER.
\end{proposition}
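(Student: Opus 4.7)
The plan is to build the LER data for the full triplet $(\mathrm{col}(f_1,f_2), q, \mathrm{graph}\,\tau|_\ca)$ by re-using the internal model supplied by the rLER hypothesis on the reduced triplet, exploiting the fact that $\mathrm{graph}\,\tau|_\ca$ is itself LES for the full dynamics to treat the transverse $x_2$-direction as an exponentially decaying perturbation. First, I would invoke the rLER property of $(f_1(\cdot,\tau(\cdot)), q(\cdot,\tau(\cdot)), \ca)$ to obtain a compact set $\mathcal{R}_1 \supseteq \ca$ LES for $\dot x_1 = f_1(x_1,\tau(x_1))$, an integer $p$, and locally Lipschitz functions $\varphi,\psi,\gamma,T$ satisfying the reproducibility equation $q(x_1,\tau(x_1)) + \gamma(T(x_1)) = 0$ on $\mathcal{R}_1$ together with the ISS-like estimate (\ref{condstabdefRob}) under an additive disturbance $v(t)$ in the $\xi$-equation.

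As candidate LER data for the full triplet I would take $\mathcal{R}' := \mathrm{graph}\,\tau|_\ca$ (which is LES for the full system by assumption, so no enlargement is needed), keep the same internal model $(\varphi,\psi,\gamma)$, and lift $T$ to the locally Lipschitz map $T'(x_1,x_2) := T(x_1)$. The reproducibility condition then holds trivially on $\mathcal{R}'$: for any $(x_1,x_2) \in \mathrm{graph}\,\tau|_\ca$ one has $x_2 = \tau(x_1)$ and $x_1 \in \ca \subseteq \mathcal{R}_1$, so $q(x_1,x_2) + \gamma(T'(x_1,x_2)) = q(x_1,\tau(x_1)) + \gamma(T(x_1)) = 0$ by the reduced-triplet reproducibility.

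The substantive part is the stability estimate. Given a bounded $\mathcal{Z}' \subset \mathcal{D}(\mathrm{graph}\,\tau|_\ca)$ and $\xi_0 \in \Real^p$, the LES of $\mathrm{graph}\,\tau|_\ca$ for the full dynamics yields a locally exponential decay of $|(x_1(t),x_2(t))|_{\mathrm{graph}\,\tau|_\ca}$; together with the Lipschitz continuity of $\tau$ and the LES of $\ca$ for the reduced dynamics, this gives exponential decay of both $|x_1(t)|_\ca$ and of the transverse error $e_2(t) := x_2(t) - \tau(x_1(t))$. I would then introduce the auxiliary reduced trajectory $\tilde x_1(t)$ solving $\dot{\tilde x}_1 = f_1(\tilde x_1,\tau(\tilde x_1))$ with $\tilde x_1(0) = x_1(0)$, use local Lipschitz continuity plus a Gr\"onwall estimate on $\dot x_1 = f_1(x_1,\tau(x_1)) + \delta(t)$ (with $\delta(t) = f_1(x_1,x_2) - f_1(x_1,\tau(x_1))$) to control $|x_1(t) - \tilde x_1(t)|$ by an exponentially small quantity, and then apply the rLER estimate to $(\tilde x_1,\xi)$ with disturbance $v(t) := q(x_1(t),x_2(t)) - q(\tilde x_1(t),\tau(\tilde x_1(t)))$, which also decays exponentially in $t$. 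Combining the bound on $|(\xi(t),\tilde x_1(t))|_{\mathrm{graph}\,T|_{\mathcal{R}_1}}$ with the decay of $|x_1 - \tilde x_1|$ and $e_2$ yields the required locally exponential class-$\mathcal{KL}$ estimate on $|(\xi(t),x_1(t),x_2(t))|_{\mathrm{graph}\,T'|_{\mathcal{R}'}}$.

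The main obstacle I expect is that the rLER bound admits a disturbance only in the $\xi$-equation, whereas in the full system the $x_1$-component is governed by $f_1(x_1,x_2)$ rather than $f_1(x_1,\tau(x_1))$, so a perturbation appears in the $z$-equation as well; this forces the detour through the auxiliary trajectory $\tilde x_1$ and the associated Gr\"onwall estimate. A second technical point is that the ISS bound in the rLER definition uses $\ell(\sup_{\tau \le t}|v(\tau)|)$, a non-decreasing quantity, so one does not get convergence to zero directly; one must convert the exponentially decaying $v$ into an exponentially decaying state by a time-shift/iterated-estimate argument, which succeeds precisely because the class-$\mathcal{KL}$ function $\beta$ is \emph{locally exponential}. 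Finally, one must verify that the bounded set $\mathcal{Z}_1 \subset \mathcal{D}(\mathcal{R}_1)$ used to invoke the rLER property accommodates the $x_1$-projection of every full-system trajectory originating in $\mathcal{Z}'$; this holds because $\mathcal{Z}' \subset \mathcal{D}(\mathrm{graph}\,\tau|_\ca)$ forces the $x_1$-component to enter any prescribed neighborhood of $\ca \subseteq \mathcal{R}_1$ in finite time.
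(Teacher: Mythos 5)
Your overall architecture matches the paper's: both take ${\cal R} = {\rm graph}\,\tau|_\ca$, lift the map by $T'(x_1,x_2)=\bar T(x_1)$, keep the internal model $(\bar\varphi,\bar\psi,\bar\gamma)$ supplied by the reduced triplet, verify the reproducibility identity on ${\rm graph}\,\tau|_\ca$ exactly as you do, and view the $\xi$-equation of the full system as the reduced $\xi$-equation driven by a disturbance $v$.

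The gap is in your stability estimate. You compare $x_1(t)$ with the reduced trajectory $\tilde x_1(t)$ started at the same point and claim that Gr\"onwall bounds $|x_1(t)-\tilde x_1(t)|$ by an exponentially small quantity because $\delta(t)=f_1(x_1,x_2)-f_1(x_1,\tau(x_1))$ decays exponentially. It does not: Gr\"onwall gives $|x_1(t)-\tilde x_1(t)|\le \int_0^t e^{L(t-s)}|\delta(s)|\,ds$, which in general grows like $e^{Lt}$ once the Lipschitz constant $L$ of $f_1(\cdot,\tau(\cdot))$ exceeds the decay rate of $\delta$. Local exponential stability of the compact set $\ca$ is not an incremental or contraction property, so nothing forces the perturbed and unperturbed trajectories to stay close even though both approach $\ca$. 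Consequently your disturbance $v(t)=q(x_1(t),x_2(t))-q(\tilde x_1(t),\tau(\tilde x_1(t)))$ need not decay, and the time-shift/iteration you invoke to beat the $\sup$ in $\ell(\sup_{\tau\le t}|v(\tau)|)$ has nothing to work with. The paper sidesteps this entirely: it picks an arbitrary $\bar x_{10}\in\ca$ (not matched to $x_1(0)$), uses only the crude uniform bound $|v(t)|\le v_M$ to conclude that $\xi(t)$ is bounded and the trajectories are ultimately bounded in some compact $S$, then shows $\omega(S)\subseteq{\rm graph}\,T|_{\cal R}$ by an invariance/contradiction argument (applying the rLER estimate with $v=0$ along the $\omega$-limit set, which lies on ${\rm graph}\,\tau|_\ca$), and finally upgrades the resulting class-$\cal KL$ bound to a \emph{locally exponential} one by a separate composite Lyapunov function $V=V_1(x_1,\xi)+\beta V_2(x_1,x_2)$ built from converse Lyapunov theorems, with $\beta$ large enough to dominate the cross term $G_1(p_1,x_2)(x_2-\tau(x_1))$. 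You would need to replace your Gr\"onwall step with an argument of this kind, or establish an incremental estimate you do not currently have, for the proof to close.
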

 \begin{proof}
 Let $\bar f_1(x_1) =  f_1(x_1,\tau(x_1))$ and $\bar q(x_1) =
 q(x_1,\tau(x_1))$. Since $\ca$ is LES for $\dot x_1 = \bar f_1(x_1)$ and the triplet
$(\bar f_1(x_1),$ $ \bar q(x_1), \ca)$ is rLER,   for any
 compact set $\bar X_1 \subset {\cal D}({\ca})$, there
 exist an integer $\bar \dimxi$ and locally Lipschitz functions $\bar \varphi:
 \Real^{\bar \dimxi} \to \Real^{\bar \dimxi}$, $\bar \psi: \Real^{\bar \dimxi}
 \to \Real$, $\bar \gamma: \Real^{\bar \dimxi} \to \Real$ and $\bar T: \Real^{\dimx_1} \to \Real^{\bar \dimxi}$
 such that, for all $\bar x_{10} \in \bar X_1$ and $\bar \xi_0 \in \Real^{\bar
 \dimxi}$ and for all locally essentially bounded $v(t)$, the solution $(\bar x_1(t), \bar \xi(t))$ of the system
 \[
 \ba{rcll}
 \dot {\bar x}_1 &=& \bar f_1(\bar x_1) & \qquad \bar x_1(0) =
 \bar x_{10}\\
 \dot {\bar \xi} &=& \bar \varphi(\bar \xi) + \bar \psi(\bar
 \xi)[\bar q(\bar x_1) + v] & \qquad \bar \xi(0) = \bar \xi_0
 \ea
 \]
 satisfies
  \beeq{\label{estim1}
        |(\bar x_1(t),\bar \xi(t))|_{{\rm graph}\left. \bar T \right |_{{\ca}}}
        \leq \beta_1(t, |(\bar x_{10}, \bar \xi_0)|_{{\rm graph}\left . \bar T \right |_{{\ca}}
        })+ \ell(\sup_{\tau \leq t} |v(\tau)|)
     }
where $\beta_1(\cdot,\cdot)$ and $\ell(\cdot)$ are respectively  a
locally exponential class-$\cal KL$ and a class-$\cal K$
functions, and
\[
 \bar \gamma(\bar T(\bar x_1)) + \bar q(\bar x_1) =0 \qquad
 \forall \, \bar x_1 \in {\ca }\,.
\]
Furthermore, by the assumption that ${\rm graph} \left . \tau
\right |_\ca$ is LES for (\ref{x1x2sys}), for any $(x_{10},
x_{20}) \in {\cal D}({\rm graph} \left . \tau \right |_\ca)$ the
solution $(x_1(t), x_2(t))$ of (\ref{x1x2sys}) with initial
condition $(x_1(0), x_2(0)) = (x_{10}, x_{20})$ satisfies
 \beeq{\label{ineq11}
 |(x_1(t),x_2(t))|_{{\rm graph} \left . \tau \right |_{\ca}} \leq
 \beta_2(t, |(x_{10},x_{20})|_{{\rm graph} \left . \tau \right |_{\ca}})
 }
where $\beta_2(\cdot,\cdot)$ is a locally exponential class-$\cal
KL$ function.

Now pick a compact set $\bar X_1 \subset {\cal D}({\ca})$ and the
functions $(\bar \varphi(\cdot), \bar \psi(\cdot), \bar
\gamma(\cdot))$ accordingly, and denote by
$(x_1(t),x_2(t),\xi(t))$ the solution of the system

 \beeq{\label{eq2}
 \ba{rcll}
    \dot x_1 &=& f_1(x_1,x_2) &\qquad x_1 \in \Real^{\dimx_1}\\
  \dot x_2 &=& f_2(x_1,x_2) & \qquad x_2 \in \Real^{\dimx_2}\\
 \dot {\xi} &=& \bar \varphi(\xi) + \bar \psi(
 \xi) q(x_1,x_2) & \qquad \xi \in \Real^{\bar \dimxi}
 \ea
 }
 with initial conditions $(x_{10}, x_{20}, \xi_0) \in  \Real^{\dimx_1} \times \Real^{\dimx_2} \times
 \Real^{\bar \dimxi}$ at time $t=0$. Let $X \subset \Real^{\dimx_1} \times \Real^{\dimx_2}$
 be an arbitrary compact set such that $X \subset {\cal D}({\rm graph} \left . \tau \right
 |_{\ca})$, let ${\cal R} := {\rm graph} \left . \tau \right
 |_{\ca}$ and let $T:\Real^{\dimx_1} \times \Real^{\dimx_2} \to \Real^{\bar
 \dimxi}$ be the locally Lipschitz function defined as $T(x_1,x_2) = \bar T(x_1)$.
  We shall prove in the following that for any initial condition  $(x_{10}, x_{20}, \xi_0)
 \in X \times \Real^{\bar \dimxi}$ the trajectory
 $(x_1(t),x_2(t),\xi(t))$ of (\ref{eq2}) satisfies

 \beeq{\label{beta3}
 |(x_1(t),x_2(t),\xi(t))|_{{\rm graph}\left . T \right |_{\cal R}}
 \leq \beta_3(t, |(x_{10}, x_{20}, \xi_0)|_{{\rm graph}
 \left . T \right |_{\cal R}})
 }
 where $\beta_3(\cdot,\cdot)$ is a locally exponential class-$\cal
KL$ function and
\[
 {\rm graph}\left . T \right |_{\cal R} = \{((x_1,x_2),\xi) \in {\rm graph} \left . \tau \right
 |_{\ca} \times \Real^{\bar \dimxi} \; : \; \xi = T(x_1,x_2) \}\,.
\]
 To this purpose, pick any $\bar x_{10} \in \ca \subset \bar
 X_1$ and note that $(x_1(t),x_2(t),\xi(t))$  satisfies

 \beeq{\label{eq3}
 \ba{rcll}
    \dot x_1(t) &=& f_1(x_1(t),x_2(t)) &\qquad x_1(0)=x_{10}\\
  \dot x_2(t) &=& f_2(x_1(t),x_2(t)) & \qquad x_2(0) = x_{20}\\
 \dot {\xi}(t) &=& \bar \varphi(\xi(t)) + \bar \psi(
 \xi(t)) [\bar q(\bar x_1(t)) + v(t)] & \qquad \xi(0)=\xi_0
 \ea
 }
 where
 \[
 v(t) = q(x_1(t),x_2(t)) - \bar q(\bar x_1(t))
 \]
 and $\bar x_1(t) = \Phi_{\bar f_1}(t,\bar x_{10})$. Let $x_1^\star \in \ca$ be such that $(x_1^\star,\tau(x_1^\star))$ is
 the projection of $(x_1,x_2)$ on ${\rm graph} \left . \tau \right
 |_\ca$. Since $\bar x_1(t) \in
 \ca$ and $x_1^\star(t) \in \ca$ for all $t \geq 0$, $X$ is compact and (\ref{ineq11}) holds,
 and $\bar q$ is locally Lipschitz,
 for any initial condition $(x_{10}, x_{20}) \in
 X$ of (\ref{eq3}) the term $v(t)$ can be bounded as
 \[
 \ba{rcl}
 |v(t)| &=& |q(x_1(t),x_2(t)) - \bar q(x_1^\star(t)) + \bar q(x_1^\star(t))- \bar q(\bar
 x_1(t))| \\
 &\leq& |q(x_1(t),x_2(t)) - \bar q(x_1^\star(t))| + |\bar q(x_1^\star(t))- \bar q(\bar
 x_1(t))| \\
 & \leq& L_q |(x_1(t),x_2(t))|_{{\rm graph} \left . \tau \right
 |_\ca} + 2 \sup_{s \in  {\ca}} |\bar q(s)|\\
  &\leq& v_M
 \ea
 \]
 for all $t\geq 0$, where $L_q$ is a bound of the Lipschitz constant of $q$ on the forward flow of (\ref{x1x2sys})
 originated from $X$ and $v_M$ a positive constant, both dependent on $X$.
 Hence, from estimate (\ref{estim1}), it follows that $|(\bar x_1(t), \xi(t))|_{{\rm graph} \left . \bar T
 \right |_{{\ca}}}$ is bounded and, since $\bar x_1(t) \in
 \ca$ and ${{\rm graph} \left . \bar T
 \right |_{{\ca}}}$ is compact, also $\xi(t)$ is bounded.
 This, in turn, implies that also the trajectories of (\ref{eq2})
 originated from $X \times \Real^{\bar \dimxi}$ are ultimately bounded,
 namely there exists a compact set $S \subset \Real^{\dimx_1+\dimx_2+\bar \dimxi} $
 such that for any $\Xi \subset \Real^{\bar \dimxi}$
 there exists a $T>0$ such that  $\forall \, (x_{10} , x_{20} , \xi_0) \in X \times \Xi$,
 $(x_1(t),x_2(t),\xi(t)) \in S$ for all $t \geq T$.
 As a consequence, the trajectories of  (\ref{eq2}) are uniformly attracted by $\omega(S)$,
 the $\omega$-limit set of  the set $S$ of system (\ref{eq2}), which is a bounded invariant set.

We prove now that $\omega(S)$ is a subset of ${\rm graph} \left .
T \right |_{{\cal R}}$. For, let $ (x_{10}', x_{20} ', \xi_0')$ be
a point of  $\omega(S)$ and note that, by (\ref{ineq11}) which
implies that ${\rm graph} \left . \tau \right |_\ca$ is uniform
attractive for the $(x_1,x_2) $ dynamics in (\ref{eq2}), and by
Proposition \ref{toolPropomega} in Appendix \ref{AppendixAuxRes},
it turns out that necessarily $x_{10}' \in \ca$ and $ x_{20}'
=\tau(x_{10}')$. Furthermore it can be proved that $\xi_0' =\bar
T(x_{10}')$. In fact, suppose that it is not true, namely that
there exists an $\epsilon>0$ such that
 \beeq{ \label{epsest}
|(x_{10} ', \xi_0')|_{{\rm graph} \left . \bar T \right |_\ca}
\geq \epsilon\,.
 }
 As $\omega(S)$ is invariant for (\ref{eq2}), for
any $(x_{10} , x_{20}, \xi_0) \in \omega(S)$ the corresponding
trajectory $(x_1(t), x_2(t), \xi(t))\in \omega(S)$ for all $t \in
\Real$, and thus $x_1(t) \in \ca \subset \bar X_1$ and $x_2(t) =
\tau(x_1(t))$ $\forall \, t \in \Real$. So, inequality
(\ref{estim1}) with $v=0$ yields that, using compactness of
$\omega(S)$, there exists a $t_\epsilon
>0$ such that for all $(x_{10} , x_{20}, \xi_0) \in \omega(S)$,
$|(x_1(t),\xi(t))|_{{\rm graph} \left . \bar T \right |_\ca}  <
\epsilon/2$ for all $t \geq t_{\epsilon}$. The previous facts,
specialized with $(x_{10},x_{20},\xi_0) =
\Phi_{(\ref{eq2})}(-t_\epsilon, (x_{10}', x_{20}', \xi_0'))$,
yield that $(x_{10}', x_{20}',
\xi_0')=\Phi_{(\ref{eq2})}(t_\epsilon, (x_{10}, x_{20}, \xi_0))$
are such that $|(x_{10}',\xi_0')|_{{\rm graph} \left . \bar T
\right |_\ca}  < \epsilon/2$ which contradicts (\ref{epsest}).
Hence, $\omega(S)$ is necessarily a subset of ${\rm graph} \left .
\bar T \right |_{ {\ca}}$. Since $S$ can be taken, without loss of
generality, such that ${\rm graph} \left . \bar T \right |_{
{\ca}} \subset {\rm int} S$, the previous facts prove
(\ref{beta3}) with the only exception that the class $\cal KL$
 function $\beta_3(\cdot,\cdot)$ is not necessarily locally exponential (see
 \cite{BI03}).
  To prove local exponential stability we follow a Lyapunov
  approach.
First, note that, by defining $p_1 = {\rm col}(x_1, \xi)$, the
first and third dynamics of (\ref{eq2}) can be rewritten as
\[
   \dot p_1 = F_1(p_1) + G_1(p_1,x_2)(x_2 - \tau(x_1))
\]
where $F_1(p_1) ={\rm col}(\bar f_1(x_1), \bar \phi(\xi) \bar
\psi(\xi) \bar q_1(x_1))$, $G_1(p_1,x_2) = {\rm col}(r_1(x_1,x_2),
\bar \psi(\xi)r_2(x_1,x_2))$ in which $r_1(\cdot,\cdot)$ and
$r_2(\cdot,\cdot)$ are properly defined smooth functions. By
assumption and by standard converse Lyapunov results, there exist
a smooth function $V_1 : {\cal D}({\rm graph} \left . \bar T
\right |_{\ca}) \to \Real$ and positive numbers $c_1$, $\underline
a_1 \leq \bar a_1$, such that
 \beeq{\label{Lyap1}
 {\partial V_1(p_1) \over \partial p_1}F_1(p_1) \leq -c_1 V_1(p_1)
 \qquad \forall \, p_1 \in {\cal D}({\rm graph} \left . \bar T
\right |_{\ca})
 }
  and
 \beeq{ \label{boundLyap1} \underline a_1 |p_1|^2_{{\rm graph} \left . \bar T \right
|_{\ca}} \leq V_1(p_1) \leq \bar  a_1 |p_1|^2_{{\rm graph} \left .
\bar T \right |_{\ca}} \qquad \forall\; p_1 \;\;  :
\;\;(x_1,x_2,\xi) \in S\,.
 }
  Similarly, by letting $p_2= {\rm col}
(x_1,x_2)$ and by rewriting (\ref{x1x2sys}) as $\dot p_2 =
F_2(p_2)$, it turns out that there exist a smooth function $V_2 :
{\cal D}({\rm graph} \left . \tau\right |_{\ca}) \to \Real$ and
positive numbers $c_2$, $\underline a_2 \leq \bar a_2$, such that
\beeq{ \label{Lyap2}
 {\partial V_2(p_2) \over \partial p_2}F_2(p_1) \leq -c_2 V_2(p_2)
 \qquad \forall \, p_2 \in {\cal D}({\rm graph} \left . \tau
\right |_{\ca}) }
 and
 \beeq{\label{boundLyap2} \underline a_2 |p_2|^2_{{\rm graph} \left . \tau \right |_{\ca}}
\leq V_2(p_2) \leq \bar  a_2 |p_2|^2_{{\rm graph} \left . \tau
\right |_{\ca}} \qquad \forall\; p_2 \;\;  : \;\;(x_1,x_2,\xi) \in
S\,.
 }
 Furthermore, note that there exists a positive $\bar \tau$ such
 that
\beeq{ \label{boundtau}
 |x_2 - \tau(x_1)| \leq \bar \tau |(x_1,x_2)|_{{\rm graph}
\left . \tau \right |_{\ca}} \qquad \forall \;  (x_1,x_2) \;\;  :
\;\;(x_1,x_2,\xi) \in S \,.
 }
 As a matter of fact, given $(x_1,x_2, \xi) \in S$, let $\bar x_1 \in \ca$ be such that $|x_1 -\bar x_1, x_2 -
 \tau(\bar x_1)| = |(x_1,x_2)|_{{\rm graph} \left .
\tau \right |_{\ca}}$. As $|x_1 - \bar x_1| \leq |(x_1,x_2)|_{{\rm
graph} \left . \tau \right |_{\ca}}$ and $|x_2 - \tau(\bar x_1)|
\leq |(x_1,x_2)|_{{\rm graph} \left . \tau \right |_{\ca}}$, and
denoting by  $\bar \tau'$ an upper bound of the Lipschitz constant
of $\tau$ on $S$, it turns out that
\[
\ba{rcl}
 |x_2 - \tau(x_1)| &=& |x_2 - \tau(\bar x_1) + \tau(\bar x_1)-
 \tau(x_1)|\\
 &\leq& |x_2 - \tau(\bar x_1)| + |\tau(\bar x_1)- \tau(x_1)|\\
 &\leq&
 |(x_1,x_2)|_{{\rm graph} \left . \tau \right |_{\ca}} + \bar
 \tau' |x_1 - \bar x_1|\\
 &\leq& (1 + \bar \tau') |(x_1,x_2)|_{{\rm graph} \left . \tau \right
 |_{\ca}}
 := \bar \tau |(x_1,x_2)|_{{\rm graph} \left .
\tau \right |_{\ca}} \ea
\]
 for all $(x_1,x_2,\xi) \in S$, namely (\ref{boundtau}) holds.
Consider now the candidate Lyapunov
 function $V(x_1,x_2,\xi) = V_1(p_1) + \beta V_2(p_2)$ for system
 (\ref{eq2}) with $\beta>0$. By (\ref{boundLyap1}) and
 (\ref{boundLyap2}), there exist positive numbers $\underline a \leq \bar
 a$ (dependent on $\beta$) such that
 \[
            \underline  a  |(x_1,\xi)|^2_{{\rm graph} \left .
\bar T \right |_{\ca}} +  \underline  a  |(x_1,x_2)|^2_{{\rm
graph} \left . \tau \right |_{\ca}}    \leq      V(x_1,x_2,\xi)
\leq   \bar  a |(x_1,\xi)|^2_{{\rm graph} \left . \bar T \right
|_{\ca}} +  \bar  a |(x_1,x_2)|^2_{{\rm graph} \left . \tau \right
|_{\ca}}
 \]

 By (\ref{Lyap1}), (\ref{Lyap2}), (\ref{boundLyap2}) and
 (\ref{boundtau}), and by the fact that $G_1(p_1)$ is locally Lipschitz,
 it turns out that there exists a $\beta^\star>0$ such that for
 all $\beta \geq \beta^\star$ and for all $(x_1,x_2,\xi) \in S$
 \[
  \left . \dot V(x_1,x_2,\xi) \right |_{(\ref{eq2})} \leq  - c \, V(x_1,x_2,\xi)
 \]
  where $c$ is a positive constant. Combining the previous facts with by
  (\ref{ineq11}), standard arguments yields (\ref{beta3}) with $\beta_3(\cdot,
  \cdot)$ a locally exponential class-{\cal KL} function. This, in
  turn, proves the proposition with the functions
  $(\varphi,\psi,\gamma)$ associated to the triplet $({\rm col}(f_1(x_1,x_2),\, f_2(x_1,x_2)),$ $q(x_1,x_2),
  {\rm graph} \left . \tau \right |_\ca)$ in the definition of LER
  given by $(\bar \varphi, \bar \psi, \bar \gamma)$. $\triangleleft$

 \end{proof}

\end{appendix}

\end{document}